\newtheorem{theorem}{Theorem}[section]
\newtheorem{lemma}[theorem]{Lemma}
\newtheorem{proposition}[theorem]{Proposition}
\theoremstyle{definition}
\newtheorem{definition}[theorem]{Definition}
\theoremstyle{remark}
\newtheorem{remark}[theorem]{Remark}
\numberwithin{equation}{section}
\DeclareMathOperator{\twodmass}{\Lambda}
\DeclareMathOperator{\ratioannu}{\beta}
\begin{document}

\title{A New Renormalized Volume Type Invariant}


\author{Wu Jinyang}
\address{225N MacLean Hall, Department of Mathematics, University of Iowa, Iowa City, IA, 52242}
\email{jwu124@uiowa.edu}
\thanks{The author acknowledges support from the Erwin and Peggy Kleinfeld Graduate Fellowship fund.}

\subjclass[2020]{Primary 53A05,53C18}

\date{September 28, 2023}

\dedicatory{}

\commby{}

\begin{abstract}
	In this paper, we define a new conformal invariant on complete non-compact hyperbolic surfaces that can be conformally compactified to bounded domains in $\mathbb{C}$. We study and compute this invariant up to doubly-connected surfaces. Our results provide a new geometric criterion for choosing canonical representations of bounded domains in $\mathbb{C}$.
\end{abstract}

\maketitle

\section{Introduction}
Let $(M,g_M)$ be a complete non-compact hyperbolic surface of finite topological type that can be conformally compactified to a bounded domain in  $\mathbb{C}$. Consider all possible choices of $\Omega \subset \mathbb{C}$ such that $M$ can be conformally compactified into them and which are conformally equivalent to each other. In this article, we study a new canonical representation through proper geometric considerations. \par 
By the Riemann mapping theorem, all simply connected domains that are not $\mathbb{C}$ are conformally equivalent to a round disk, which can be viewed as a canonical model. It is well known that any doubly-connected region is conformally equivalent to a ring $\{z \in \mathbb{C}; \ratioannu < \abs{z} < 1\}$, where $0 \leq \ratioannu < 1$. For a reference, see \cite{MR510197}. Note that when $\ratioannu= 0$, the domain is simply a punctured disk. For multiply-connected domains, see \cite{MR510197} for some classification results. These results lead to a complete characterization of the moduli space of such surfaces. For example, all $n$-connected $(n>2)$ bounded domains with non-degenerate boundary components form a $3n-6$-dimensional space. 
In this article, we propose a new geometric point of view.\par 
 We set the notations for our construction. Let $M$ be a complete non-compact hyperbolic surface and $\Omega \subset \mathbb{C}$ be a fixed conformal compactification of $M$. Then there exists a smooth function $u$ on $\Omega$ such that $(M, e^{-2u} g_M)$ is isometric to $(\Omega, g_E)$. Let $v=e^{-u}$. Then $u$ and $v$ satisfy the following equations,
 \begin{equation}
\label{Liouvilleinftyboundaryproblem}
	\begin{cases}
		\Delta u = e^{2u} &\text{ in } \Omega,	\\
		u = + \infty & \text{ on }\partial \Omega,
	\end{cases}
 \end{equation}
and
    \begin{equation}\label{Liouvillevequation}
	\begin{cases}
		v \Delta v = \abs{\nabla v}^2- 1 & \text{ in }\Omega,\\
		v = 0 & \text{ on }\partial \Omega.
	\end{cases}
\end{equation}
Furthermore, $v$ has asymptotics near $C^{3,\alpha}$ boundary components of $\partial \Omega$ according to \cites{MR3758549,MR4308060},
\begin{equation}\label{boundaryexpansionv}
	v (z) = d(z) - \frac{1}{2} \kappa(y) d(z)^2 + c_3(y) d(z)^3 + O(d^{3 + \alpha}(z)),
\end{equation}
 $d(z)$ is the distance from $z$ to the boundary $\partial \Omega$, $y \in \partial \Omega$ is the point where $d(z) = \abs{z - y}$, and $\kappa(y)$ is the curvature at $y$. $c_3(y)$ is the first global term in \eqref{boundaryexpansionv}, which depends on the global geometry of $\Omega$. We remark that when $\partial \Omega$ is smooth, $v$ is the special boundary defining function as in \cite{MR1758076}*{Lemma 2.1}.\par 
We consider a scaling-free quantity following the work of \cite{MR4308060}. Assuming the outermost boundary component $\mathcal{C}$ of $\Omega$ is $C^{3,\alpha}$, we consider the following functional for $\mathcal{C}$,
\begin{equation}\label{InvariantLamba}
\lambda(\Omega,v)= -\int_{\mathcal{C}} \dd{l}\cdot \int_{\mathcal{C}} c_3(y) \dd{l(y)},
\end{equation}
where the line integral is taken under the Euclidean metric and $c_3$ is defined as in \eqref{boundaryexpansionv}. \par
The functional $\lambda$ was first studied by Shen and Wang \cite{MR4308060} in a slightly different setting. Using our notation, Shen and Wang have proven the following.
\begin{theorem}[{\cite{MR4308060}*{Theorem 1.1, 1.2, 1.3}}]\label{Shenwangresult}
	Let $\Omega$ be a bounded $C^{3,\alpha}$ domain, and $v$ be a solution to \eqref{Liouvillevequation} on $\Omega$. Let $\lambda$ be given as in \eqref{InvariantLamba}. Then 
	\begin{enumerate}[label=\alph*)]
		\item \label{positivemasstheoremSHEN} $\lambda (\Omega, v) \geq 0$, with equality if and only if $\Omega$ is a round disk;
		\item \label{gaptheoremSHEN} if $\Omega$ is multiply-connected, then
		\begin{equation}
			\lambda (\Omega, v)  > \frac{2\pi^2}{3} .
		\end{equation}
	\end{enumerate}
\end{theorem}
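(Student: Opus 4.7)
Both parts rest on an asymptotic analysis of the truncation $\Omega_\epsilon = \{v > \epsilon\}$ as $\epsilon \to 0$. The key geometric inputs are Gauss--Bonnet in both $g_M = v^{-2}g_E$ and $g_E$, the expansion \eqref{boundaryexpansionv}, the hyperbolic isoperimetric inequality, Cauchy--Schwarz, and a maximum-principle comparison between $v_\Omega$ and $v_\mathcal{D}$, where $\mathcal{D}$ is the Jordan interior of the outer boundary $\mathcal{C}$. For (b) we additionally need an explicit computation on round annuli.

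\textbf{Step 1: integral identity.}
Combining Gauss--Bonnet for $g_M$ and $g_E$ on $\Omega_\epsilon$ with the conformal transformation law $\kappa_{g_M} = v\kappa_{g_E} - \partial_n v$ yields
\[
\mathrm{Area}_{g_M}(\Omega_\epsilon) = \frac{1}{\epsilon}\int_{\partial\Omega_\epsilon}\abs{\nabla v}\, dl_E.
\]
Substituting \eqref{boundaryexpansionv} in Fermi coordinates $(s,y)$ along each boundary component produces
\[
A_\epsilon := \mathrm{Area}_{g_M}(\Omega_\epsilon) = \frac{L(\partial\Omega)}{\epsilon} - 4\pi(2-n) + 3\epsilon \sum_i\int_{\mathcal{C}_i}c_3\, dl + O(\epsilon^{1+\alpha}),
\]
\[
L_\epsilon := \mathrm{Length}_{g_M}(\partial\Omega_\epsilon) = \frac{L(\partial\Omega)}{\epsilon} - 2\pi(2-n) - \frac{\epsilon}{2}\sum_i\int_{\mathcal{C}_i}\kappa^2\, dl + O(\epsilon^{1+\alpha}).
\]

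\textbf{Step 2: positivity in the simply connected case.}
When $n = 1$ the region $\Omega_\epsilon$ is simply connected inside the hyperbolic surface $(\Omega, g_M)$, so the hyperbolic isoperimetric inequality gives $L_\epsilon^2 \geq A_\epsilon^2 + 4\pi A_\epsilon$. Taking $\epsilon \to 0$ in the deficit yields
\[
4\pi^2 - L(\mathcal{C})\int_\mathcal{C}\kappa^2\, dl - 6L(\mathcal{C})\int_\mathcal{C}c_3\, dl \geq 0.
\]
Cauchy--Schwarz gives $L(\mathcal{C})\int_\mathcal{C}\kappa^2\, dl \geq \bigl(\int_\mathcal{C}\kappa\, dl\bigr)^2 = 4\pi^2$, so $\int_\mathcal{C}c_3\, dl \leq 0$, i.e., $\lambda(\Omega,v) \geq 0$. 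Equality saturates both inequalities: $\kappa$ is constant (so $\mathcal{C}$ is a Euclidean circle) and $\Omega_\epsilon$ is a hyperbolic geodesic disk for small $\epsilon$, forcing $\Omega$ to be a round disk.

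\textbf{Step 3: multiply connected case and gap (b).}
For multiply connected $\Omega$, let $\mathcal{D}$ be the bounded simply connected Jordan interior of $\mathcal{C}$. Since both $v_\Omega$ and $v_\mathcal{D}$ solve \eqref{Liouvillevequation} with $v_\Omega = 0$ on $\partial\Omega \supsetneq \mathcal{C}$ and $v_\mathcal{D} = 0$ only on $\mathcal{C}$, the comparison principle applied to $u = -\log v$ gives $v_\Omega \leq v_\mathcal{D}$ on $\Omega$; since both boundary expansions \eqref{boundaryexpansionv} share the same $d$ and $d^2$ coefficients, the inequality forces $c_3^\Omega \leq c_3^\mathcal{D}$ pointwise on $\mathcal{C}$, whence $\lambda(\Omega,v) \geq \lambda(\mathcal{D},v_\mathcal{D}) \geq 0$, with strict inequality unless $\Omega = \mathcal{D}$, which is excluded. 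For (b), I would compute $\lambda$ explicitly on the model round annulus $A_\ratioannu$: using the logarithmic coordinate and the hyperbolic strip metric,
\[
v_{A_\ratioannu}(z) = \frac{\abs{z}\log(1/\ratioannu)}{\pi}\sin\!\left(\pi\,\frac{\log\abs{z} - \log\ratioannu}{\log(1/\ratioannu)}\right),
\]
whence expansion near $\abs{z}=1$ gives $c_3 \equiv -\tfrac{1}{6} - \tfrac{\pi^2}{6(\log 1/\ratioannu)^2}$ and
\[
\lambda(A_\ratioannu, v) = \frac{2\pi^2}{3} + \frac{2\pi^4}{3(\log 1/\ratioannu)^2} > \frac{2\pi^2}{3}.
\]
For a general doubly connected $\Omega \subset \mathbb{C}$, write $\Omega = G(A_\ratioannu)$ for a biholomorphism $G$ and use $v_\Omega(G(z)) = v_{A_\ratioannu}(z)\abs{G'(z)}$ to expand $c_3^\Omega$ at the outer boundary. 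A Schwarzian-chain-rule computation produces a decomposition $\lambda(\Omega) = \lvert c_3^{A_\ratioannu}\rvert\bigl(\int\abs{G'}d\theta\bigr)\bigl(\int\abs{G'}^{-1}d\theta\bigr) + (\text{Schwarzian correction})$; the first term is $\geq \lvert c_3^{A_\ratioannu}\rvert \cdot (2\pi)^2 = 2\pi^2/3 + 2\pi^4/(3(\log 1/\ratioannu)^2)$ by Cauchy--Schwarz on $\abs{G'}$, and the Schwarzian correction is $\geq 0$ by an argument paralleling Step 2 applied via the Riemann map of $\mathcal{D}$. Hence $\lambda(\Omega) > 2\pi^2/3$.

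\textbf{Main obstacle.}
The hardest step is controlling the Schwarzian correction in Step 3 for a general doubly connected $\Omega$: the isoperimetric bound in Step 2 requires simple connectivity, which fails for the annular $\Omega_\epsilon$. The plan is to route this through the Jordan interior $\mathcal{D}$, decomposing the annular map $G$ as $F_\mathcal{D}\circ h$ with $h : A_\ratioannu \hookrightarrow D$ a holomorphic embedding, and reducing the sign question back to the simply connected result via the Schwarzian chain rule $S(G) = S(F_\mathcal{D})(h)\,(h')^2 + S(h)$. A secondary difficulty is the rigidity analysis in Step 2: showing that simultaneous saturation of the hyperbolic isoperimetric inequality and Cauchy--Schwarz---expressed here only asymptotically as $\epsilon \to 0$---propagates to $\Omega$ itself, which requires the real-analyticity of $v$ together with boundary regularity from \eqref{boundaryexpansionv}.
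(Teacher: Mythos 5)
Your Steps 1--2 are sound: the identity $\mathrm{Area}_{g_M}(\Omega_\epsilon)=\epsilon^{-1}\int_{\partial\Omega_\epsilon}\abs{\nabla v}\,dl_E$, the expansions of $A_\epsilon$ and $L_\epsilon$ (they check out against $v=\tfrac{1-r^2}{2}$ on the disk), and the isoperimetric-deficit computation $4\pi^2-L\int_{\mathcal C}\kappa^2-6L\int_{\mathcal C}c_3\geq 0$ correctly yield $\lambda\geq 0$ and the disk rigidity in the simply connected case. Note this is a genuinely different route from the one this paper takes: here everything is pulled back by a conformal map $f$ to the model disk or annulus (\autoref{c3formula}), and positivity is read off from the Laurent coefficients of a holomorphic square root $g$ of $1/f_z$ via $\sum_k\abs{b_k}^2\,2k(2k-2)r^{2k}\geq 0$ in \eqref{Mainresultkeyinequality}, with the existence of a single-valued $g$ on the annulus secured by the monodromy argument of \autoref{technicalmellasquareroot}.

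The genuine gap is in Step 3, and it is exactly the point you flag as the ``main obstacle'' without resolving it. For part \ref{gaptheoremSHEN} you need the ``Schwarzian correction'' for an annular conformal map $G\colon A_\ratioannu\to\Omega$ to be nonnegative, and your proposed reduction via $S(G)=S(F_{\mathcal D})(h)\,(h')^2+S(h)$ does not close: the $S(h)$ contribution comes from an annulus-to-disk embedding $h$, for which the truncations are themselves annular, so the hyperbolic isoperimetric inequality of Step 2 is unavailable---the difficulty has only been relocated, not removed. This sign question is precisely the content of \autoref{Mainresultannulus1}, whose proof requires both the coefficient inequality \eqref{Mainresultkeyinequality} and the nontrivial fact that $1/f_z$ admits a global holomorphic square root on the annulus (\autoref{technicalmellasquareroot}); your proposal contains no substitute for either ingredient. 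Two further, smaller issues: the strictness $c_3^{\Omega}<c_3^{\mathcal D}$ somewhere on $\mathcal C$ needs a third-order Hopf-type lemma for the difference $v_{\mathcal D}-v_\Omega$ (a positive supersolution can a priori vanish to high order on the boundary), which you assert but do not prove---and without it, or without part \ref{gaptheoremSHEN}, the equality case of \ref{positivemasstheoremSHEN} is open for multiply connected $\Omega$; and part \ref{gaptheoremSHEN} concerns general multiply connected domains, so you must also record the reduction to the doubly connected case by filling in all but one hole and comparing as in your first paragraph of Step 3.
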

\autoref{Shenwangresult} can be regarded as a rigidity and gap theorem and may also be interpreted as a type of positive mass theorem. \par 

If $M$ is multiply-connected, each boundary component can be realized as the outermost boundary via a proper conformal transformation. Even with the fixed boundary component, \eqref{InvariantLamba} is not conformally invariant. We define the following renormalized volume-type invariant that is also conformally invariant for the pair $(M,g_M)$.

\begin{definition}\label{keydefinition}
Let $(M,g_M)$ be a complete, non-compact hyperbolic surface of finite topological type that can be conformally compactified to a bounded domain in $\mathbb{C}$. Let $\lambda$ be given as in \eqref{InvariantLamba}. Define
\begin{equation}\label{twodmassinf}
	\twodmass(M,g_M) = \inf_{\Omega}\left\{
\textstyle \lambda(\Omega,v)\left\vert
\begin{aligned}
	&(M,v^2g_M) \text{ is isometric to }(\Omega, g_E) \text{ and the}\\ &  \text{outermost boundary component of $\Omega$ is }C^{3,\alpha}.	\end{aligned}\right.
\right\}
\end{equation}
\end{definition}
By the Riemann mapping theorem, for any $M$, we may find a compactification such that the outermost boundary is smooth. Hence, $\Lambda$ is well-defined. 
\par 
Our main result is the following.
\begin{theorem}\label{Maintheorem}
Let $(M,g_M)$ be a complete non-compact hyperbolic doubly-connected surface with conformal compactification $(\Omega_0,g_E)$, where $\Omega_0 \subset\mathbb{R}^2$ is bounded. Let $\twodmass$ and $\lambda$ be given as in \eqref{twodmassinf} and \eqref{InvariantLamba} respectively. Then 
\begin{equation}
\twodmass(M,g_M) = \frac{2 \pi^2}{3} \biggl[\biggl(\frac{\pi}{\ln \ratioannu} \biggr)^2 + 1 \biggr],
\end{equation}
where $0 < \ratioannu < 1$ is the exponent of the modulus of continuity of $\Omega_0$, such that $\Omega_0$ is biholomorphic to $B_1 - \overline{B}_{\ratioannu}$. \par 
Moreover, for any bounded $\Omega$ biholomorphic to $\Omega_0$ with a $C^{3,\alpha}$ outermost boundary component, 
\begin{equation}\label{Maintheoremeq}
	\lambda(\Omega,v)  \geq \frac{2 \pi^2}{3} \biggl[\biggl(\frac{\pi}{\ln \ratioannu} \biggr)^2 + 1 \biggr],
\end{equation}
and equality holds if and only if $\Omega$ is the image of $B_1 - \overline{B}_{\ratioannu}$ under a composition of translation and homotheties. 
\end{theorem}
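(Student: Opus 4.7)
My plan is to combine an explicit model computation on the standard annulus with a pullback decomposition that isolates the ``global'' modular contribution of $c_3$ from local boundary data, after which Cauchy-Schwarz and the simply-connected case of Theorem~\ref{Shenwangresult} close the lower bound.

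For the upper bound, the hyperbolic metric on $\Omega_0 = B_1 - \overline{B}_\ratioannu$ is rotationally symmetric, giving the explicit radial formula $v_0(z) = \tfrac{|z|\ln(1/\ratioannu)}{\pi}\sin\!\bigl(\tfrac{\pi\ln|z|}{\ln\ratioannu}\bigr)$. Taylor-expanding in $d = 1 - |z|$ at the outer boundary $\{|z|=1\}$ gives the constant coefficient $c_{3,0} = -\tfrac{1}{6}\bigl[1+(\pi/\ln\ratioannu)^2\bigr]$, and direct substitution into \eqref{InvariantLamba} with outer-boundary length $2\pi$ yields $\lambda(\Omega_0, v_0) = \tfrac{2\pi^2}{3}\bigl[1+(\pi/\ln\ratioannu)^2\bigr]$.

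For the lower bound, let $\phi\colon\Omega_0\to\Omega$ be a biholomorphism; by the Kellogg-Warschawski boundary regularity theorem it extends $C^{3,\alpha}$ to $\partial B_1$. Comparing the two factorizations $g_M = v_0^{-2}g_E = \phi^*(v^{-2}g_E)$ gives the pullback identity $v\circ\phi = v_0|\phi'|$. Matching Taylor expansions in Fermi coordinates to order $d^3$ on both sides yields a pointwise decomposition on $\partial B_1$,
\begin{equation*}
c_3\bigl(\phi(e^{i\theta})\bigr) = \frac{c_{3,0}}{|\phi'(e^{i\theta})|^{2}} + \mathcal{L}(\theta),
\end{equation*}
where $\mathcal{L}(\theta)$ depends only on the $3$-jet of $\phi$ at $e^{i\theta}$, and hence only on the local shape of $\mathcal{C}$ at $\phi(e^{i\theta})$. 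Using $dl = |\phi'(e^{i\theta})|\,d\theta$ and $L = \int_0^{2\pi}|\phi'(e^{i\theta})|\,d\theta$, the invariant splits as
\begin{equation*}
\lambda(\Omega,v) = -Lc_{3,0}\int_0^{2\pi}\frac{d\theta}{|\phi'(e^{i\theta})|} \;-\; L\int_0^{2\pi}\mathcal{L}(\theta)|\phi'(e^{i\theta})|\,d\theta.
\end{equation*}
For the first summand, Cauchy-Schwarz $\bigl(\int|\phi'|d\theta\bigr)\bigl(\int|\phi'|^{-1}d\theta\bigr)\geq 4\pi^2$ combined with $c_{3,0}<0$ gives a lower bound $-4\pi^2c_{3,0} = \tfrac{2\pi^2}{3}[1+(\pi/\ln\ratioannu)^2]$; for the second, intrinsicness of $\mathcal{L}$ along $\mathcal{C}$ lets me apply Theorem~\ref{Shenwangresult}(\ref{positivemasstheoremSHEN}) to the simply-connected Jordan domain bounded by $\mathcal{C}$, in which the analogous $c_{3,0}$ vanishes and the boundary $c_3$ reduces to $\mathcal{L}$, concluding non-negativity. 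Equality in Cauchy-Schwarz forces $|\phi'|$ constant on $\partial B_1$, and Shen-Wang rigidity forces $\mathcal{L}\equiv 0$; together these force $\phi$ to be affine and hence $\Omega$ a similarity image of $\Omega_0$.

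The principal technical obstacle is the Fermi-coordinate derivation of the decomposition, which requires verifying rigorously that the global modular dependence of $c_3$ is captured precisely by the factor $c_{3,0}|\phi'|^{-2}$, with all other contributions isolated in a local functional $\mathcal{L}$ of the shape of $\mathcal{C}$ alone. This clean additive split is what allows the lower bound to reduce cleanly to the combination of Cauchy-Schwarz and the simply-connected positive-mass inequality of \cite{MR4308060}.
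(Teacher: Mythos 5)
Your upper bound and your treatment of the first summand (the explicit annulus expansion giving $c_{\ratioannu,3}=-\tfrac16[(\pi/\ln\ratioannu)^2+1]$, the pullback $v\circ\phi=v_\ratioannu\abs{\phi'}$, and Cauchy--Schwarz on $\int\abs{\phi'}\cdot\int\abs{\phi'}^{-1}\ge 4\pi^2$) coincide with the paper's. The gap is in the second summand. Your decomposition $c_3(\phi(e^{i\theta}))=c_{3,0}\abs{\phi'}^{-2}+\mathcal{L}(\theta)$ with $\mathcal{L}$ depending on the $3$-jet of $\phi$ is fine, but the inference ``hence $\mathcal{L}$ depends only on the local shape of $\mathcal{C}$'' is false, and with it the identification of $\mathcal{L}$ with the boundary $c_3$ of the filled-in Jordan domain $D$. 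The $3$-jet of $\phi$ at $e^{i\theta}$ is global data about $\Omega$ (already $\abs{\phi'}$ on the boundary is a harmonic-measure-type quantity), and $\mathcal{L}$ is, up to universal lower-order curvature terms, a Schwarzian-derivative expression in $\phi$. Writing $\phi=\psi\circ\sigma$ with $\psi\colon B_1\to D$ the Riemann map and $\sigma=\psi^{-1}\circ\phi$, the cocycle law for the Schwarzian shows $\mathcal{L}_\phi=\mathcal{L}_\psi\circ\sigma$ only when $S(\sigma)=0$, i.e.\ when $\sigma$ is M\"obius; generically $\sigma$ is a non-M\"obius conformal map of the annulus into the disk and the two local terms differ. (A sanity check: for $\Omega=B_1-\overline{B_\beta(\epsilon)}$ the map $\phi$ is M\"obius and one does get $c_3^\Omega=c_{\ratioannu,3}\abs{\phi'}^{-2}$ exactly; the identity you need is special to that case.) So the non-negativity of $-L\int\mathcal{L}\abs{\phi'}\dd{\theta}$ cannot be obtained by citing Theorem~\ref{Shenwangresult}\ref{positivemasstheoremSHEN} for $D$, and indeed the logic in the paper runs the other way: the simply connected inequality is \emph{recovered} from the doubly-connected argument, not used as an input.

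What the paper does instead, and what your proposal is missing, is a direct proof that the remainder
\begin{equation*}
2\pi\fint_{\partial B_1}\pdv[order=2]{}{r}\frac{1}{\abs{f_z}}-2\pi\fint_{\partial B_1}\pdv{}{r}\frac{1}{\abs{f_z}}
\end{equation*}
is nonnegative. This rests on two ingredients: first, the topological Lemma~\ref{technicalmellasquareroot}, a winding-number computation showing that $\log\partial_z f$ is single-valued on the annulus so that $1/\partial_z f$ admits a holomorphic square root $g$ (this is automatic on a disk but is a genuine obstruction on $B_1-\overline{B}_\ratioannu$); second, the Laurent expansion $g=\sum b_kz^k$, which gives $\fint_{\partial B_r}\abs{g}^2=\sum\abs{b_k}^2r^{2k}$ and hence the termwise-nonnegative identity $\bigl(r^2\partial_r^2-r\partial_r\bigr)\sum\abs{b_k}^2r^{2k}=\sum\abs{b_k}^2\,2k(2k-2)\,r^{2k}\ge 0$. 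The same expansion drives the rigidity: vanishing of this quantity kills all $b_k$ with $k\neq 0,1$, so $f_z=(b_0+b_1z)^{-2}$ and $f$ is M\"obius, after which equality in Cauchy--Schwarz forces $b_0=0$ or $b_1=0$. Your rigidity sketch inherits the same problem (``Shen--Wang rigidity forces $\mathcal{L}\equiv0$'' would only tell you $\mathcal{C}$ is a round circle, which together with $\abs{\phi'}$ constant on $\partial B_1$ does not by itself force $\phi$ affine). To repair your argument you would need to replace the appeal to the simply connected theorem by the square-root-plus-Fourier positivity step, at which point you have the paper's proof.
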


Our definition provides a new type of conformal invariant for two-dimensional hyperbolic conformally compact Einstein (CCE) manifolds. For higher dimensions, the renormalized volume is a preferred conformal invariant. However, it is topological in two dimensions for hyperbolic CCE manifolds. By \cite{MR2336463}*{Corollary 3.5} and \cite{MR1813434}*{Appendix A.1}, the renormalized volume of a conformally compact hyperbolic surface $(M,g_M)$ equals $-2\pi \chi(M)$. \autoref{keydefinition} provides an alternative invariant to work on. Our construction focuses on the geometric meaning of the next term in corresponding expansion \eqref{boundaryexpansionv}. We are hopeful that our invariant can serve as a substitute for the renormalized volume in two dimensions.

\autoref{Maintheorem} extends Shen and Wang's work \cite{MR4308060}, in which they compute $\twodmass$ when $M$ is simply connected, and separate this case with multiply-connected domains. Our result further characterizes doubly-connected $M$. \autoref{Maintheorem} shows $\twodmass$ can be attained for doubly-connected domains, specifically as $B_1-\overline{B}_{\ratioannu}$ up to a composition of translation and homotheties. In particular, when $\ratioannu=0$, \autoref{Maintheorem} also provides the sharp case of \ref{gaptheoremSHEN} in \autoref{Shenwangresult}.

An interesting byproduct of \autoref{Maintheorem} is a new geometric interpretation of the exponent of modulus of continuity $\ratioannu$. 

For CCE manifolds in dimension $4$, there is also some interest in studying specific compactifications; refer to \cites{MR1909634,MR3886176,MR4130465,MR4727591} for an incomplete list.

Our approach is partly inspired by Shen and Wang \cite{MR4308060}, though there are several key differences. Shen and Wang use conformal transformations and compare the boundary integral \eqref{InvariantLamba} of the underlying domain with that of the punctured disk. Subsequently, techniques involving the Schwarzian derivative are applied. In our work, we use more refined models. Specifically, we use Fourier series to derive inequalities and establish rigidity results. In particular, our method can recover their results. 

Note that by \cite{MR1237124}, solutions to \eqref{Liouvilleinftyboundaryproblem} on $C^{3,\alpha}$ domains exist and are unique. Thus, the functional $\lambda$ can be defined on bounded $C^{3,\alpha}$ domains $\Omega  \subset \mathbb{C}$ without specifying the solution to \eqref{Liouvillevequation} on $\Omega$. For punctured domains, we prove the following existence result and include the proof in the appendix.
\begin{proposition}\label{existenceextension}
Let $\Omega \subset \mathbb{C}$ be a smooth domain, then there exists a solution to \eqref{Liouvilleinftyboundaryproblem} on $\Omega-\{p_1,\cdots,p_n\}$ and it is unique among all solutions $u$ satisfying the following growth condition,
\begin{equation}\label{growthcondition}
	u(x) \geq - \log\bigl(-r_l \cdot \abs{x - p_l}\cdot \log \bigl(\frac{\abs{x-p_l}}{r_l}\bigr)\bigr),
\end{equation}
near $p_l$ for all $l=1,\cdots, k$, for $r_l > 0$ such that $B_{r_l}(p_l) \supset \Omega$.
\end{proposition}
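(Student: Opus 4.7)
I would prove existence by exhausting $M := \Omega \setminus \{p_1,\ldots,p_k\}$ with smooth subdomains and a monotone convergence argument, and uniqueness by showing that any solution satisfying \eqref{growthcondition} gives a complete hyperbolic metric and then invoking the uniqueness of such a metric within a fixed conformal class.

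\emph{Existence.} For $j$ sufficiently large set $\Omega_j := \Omega \setminus \bigcup_{l=1}^{k}\overline{B_{1/j}(p_l)}$, a smooth exhaustion of $M$. The cited existence result \cite{MR1237124} provides a unique solution $u_j$ of \eqref{Liouvilleinftyboundaryproblem} on each $\Omega_j$. A maximum principle applied to $u_j - u_{j+1}$ on $\Omega_j$ (using that $u_j = +\infty$ on $\partial\Omega_j \cap \partial B_{1/j}(p_l)$ while $u_{j+1}$ is finite there, and that both have the same leading behavior \eqref{boundaryexpansionv} on $\partial\Omega$) shows $u_j \geq u_{j+1}$, so $\{u_j\}$ is monotone decreasing on every compact subset of $M$. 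For a uniform lower bound I would compare $u_j$ with two sub-solutions: the solution $u_\Omega$ of \eqref{Liouvilleinftyboundaryproblem} on the unpunctured $\Omega$, and the explicit hyperbolic density
\[
U_l(z) := -\log\bigl(-|z-p_l|\log(|z-p_l|/r_l)\bigr)
\]
on $B_{r_l}(p_l)\setminus\{p_l\}$, where $r_l \geq 1$ is chosen so that $B_{r_l}(p_l) \supset \Omega$. Each is a solution, and since $u_j = +\infty$ on $\partial\Omega_j$ dominates their boundary values, the maximum principle yields $u_j \geq \max(u_\Omega, U_l)$. Two-sided bounds together with interior Schauder estimates produce a classical limit $u := \lim_j u_j$ solving $\Delta u = e^{2u}$ on $M$; the bound $u \geq u_\Omega$ enforces $u = +\infty$ at $\partial\Omega$, while $u \geq U_l$ delivers the growth bound \eqref{growthcondition}.

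\emph{Uniqueness.} Let $u_1,u_2$ both solve the problem and satisfy \eqref{growthcondition}. The conformal metrics $g_i := e^{2u_i}g_E$ on $M$ have constant curvature $-1$. I would verify completeness of each: near $\partial\Omega$ the expansion \eqref{boundaryexpansionv} gives $e^{u_i} \sim 1/d(\cdot,\partial\Omega)$, whose radial integral diverges logarithmically; near each puncture $p_l$ the growth condition yields
\[
\int_0^\epsilon e^{u_i(s)}\,ds \;\geq\; \int_0^\epsilon \frac{ds}{-r_l\, s \log(s/r_l)} \;=\; +\infty
\]
via the substitution $t = \log(s/r_l)$. Hence both $g_i$ are complete hyperbolic metrics on the hyperbolic Riemann surface $M$ in a common conformal class, and the standard uniqueness of such a metric---proved by lifting to the universal cover $\mathbb{H}$ and invoking the Ahlfors--Schwarz lemma in both directions---forces $g_1 = g_2$, i.e., $u_1 = u_2$.

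\emph{Main obstacle.} The most delicate point is making the monotonicity and lower-bound comparisons rigorous \emph{at} $\partial\Omega$, where both sides of the relevant inequalities blow up. This requires running the maximum principle on differences that must first be shown to be $o(1)$ as one approaches $\partial\Omega$, via uniform control of the leading asymptotic \eqref{boundaryexpansionv} for each $u_j$ with constants independent of $j$; this is what guarantees that the limit $u$ inherits the full blow-up rate rather than only the weaker condition $u = +\infty$.
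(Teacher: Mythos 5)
Your existence argument is essentially the paper's (\autoref{punctureexistence}): exhaust the punctured domain, use monotonicity from the maximum principle, bound below by $u_\Omega$ and the explicit punctured-disk solutions, and pass to the limit via interior elliptic estimates. One practical difference: the paper exhausts $\Omega$ itself by compact sets $A_m$ and works on $\Omega_m = A_m - \bigcup_l B_{1/m}(p_l)$, so that $\overline{\Omega}_m \subset \Omega_{m+1}$ and the comparison $w_m \geq w_{m+1}$ is immediate ($w_m = +\infty$ on all of $\partial \Omega_m$ while $w_{m+1}$ is finite there); this sidesteps entirely the ``main obstacle'' you flag about comparing two solutions that blow up on the common boundary $\partial\Omega$. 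Your uniqueness argument, however, is genuinely different. The paper first proves \autoref{asympointboundary} (two-sided asymptotics $u = -\log(-\abs{x-p_l}\log\abs{x-p_l}) + O(1)$ at each puncture, obtained by squeezing between $u_{B_{r_l}(p_l)-\{p_l\}}$ and $u_{B_1 - \overline{B}_r}$ with $r = \abs{x}^k$, $k \to \infty$) and then runs the $su_1$ versus $u_2$ maximum-principle trick of \cite{MR1237124}, using that $su_1/u_2 \to s < 1$ at all boundary components. You instead show that the growth condition and the boundary blow-up make $e^{2u_i}g_E$ complete (the integrals $\int_0 ds/(-s\log s)$ and $\int_0 ds/s$ both diverge) and invoke uniqueness of the complete conformal hyperbolic metric on the hyperbolic surface $M$. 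Both routes are sound: yours is shorter and more conceptual, and it additionally identifies the solution as the Poincar\'e metric of $M$, but it outsources the key step to the uniqueness half of uniformization (Killing--Hopf on the universal cover, or a generalized maximum principle), whereas the paper's argument is elementary and self-contained at the level of the classical maximum principle plus the convexity lemma of \cite{MR1237124}. Two small points to tidy: your $U_l$ should carry the normalization matching \eqref{growthcondition} (the paper's explicit solution is $-\log\bigl(-r_l\abs{x-p_l}\log(\abs{x-p_l}/r_l)\bigr)$, which differs from your formula by the additive constant $-\log r_l$ --- harmless for completeness, but it should match the comparison function you actually use), and for completeness near $\partial\Omega$ you only need the one-sided lower bound $e^{u} \geq c/d$, not the full expansion \eqref{boundaryexpansionv}.
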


We list some new directions. It is worthwhile to consider $\twodmass(M,g_M)$ for general multiply-connected domains. The resulting compactification will be a new canonical classification of bounded domains in $\mathcal{C}$, see \cite{MR510197} for some known classifications. We are seeking correct models for multiply-connected cases. For conformally compact hyperbolic surfaces of nonzero genus and conformally compact hyperbolic manifolds in general, defining $\twodmass(M,g_M)$ for such spaces would be interesting.

This paper is organized as follows. In \autoref{Techinicaltools} we prove a technical lemma. The proof of \autoref{Maintheorem} is split into two parts. In \autoref{oneconnecteddomains}, we prove the inequality in \eqref{Maintheoremeq}. In \autoref{rigidityofoneconnecteddomains}, we study when equality in \eqref{Maintheoremeq} holds.

The author would like to thank Biao Ma for introducing the work of Shen and Wang \cite{MR4308060}. The author would also like to thank Hao Fang, Lihe Wang and Biao Ma for helpful discussions, and Hao Fang for all the guidance on writing this paper. 

\section{A Technical Tool} \label{Techinicaltools}
In this section, we prove a technical lemma that provides an analytic result on annuli. The corresponding result for discs is easier. See \cite{MR924157} for more details. 

\begin{theorem}[Kellogg-Warschawski Theorem]\cite{MR1217706}*{Theorem 3.6}\label{keggtheorem}
Let $f$ map $B_1$ conformally onto a domain bounded by a $C^{k,\alpha}$ Jordan curve. Then $f$ has an extension in $C^{k,\alpha}(\overline{B}_1)$.
\end{theorem}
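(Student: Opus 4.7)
The plan is to follow the classical harmonic-analysis approach, reducing the boundary regularity of $f$ to that of $\log f'$ on the disc. First I would invoke Carathéodory's theorem to extend $f$ continuously to a homeomorphism $\overline{B}_1 \to \overline{\Omega}$; this gives a parametrization $\theta \mapsto f(e^{i\theta})$ of $\partial\Omega$, to be compared with the arc-length parametrization $s \mapsto \gamma(s)$. Since $\partial\Omega$ is $C^{k,\alpha}$, the tangent angle $\tau(s) = \arg\gamma'(s)$ is $C^{k-1,\alpha}$.

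Next I would reduce the problem to the regularity of $\log f'$, which is well-defined and holomorphic on $B_1$ because $f' \neq 0$ there. On $\partial B_1$, the imaginary part obeys
\begin{equation*}
\mathrm{Im}(\log f'(e^{i\theta})) = \tau(s(\theta)) - \theta - \frac{\pi}{2},
\end{equation*}
where $s(\theta)$ is the arc length of $f(e^{i\theta})$ along $\partial \Omega$. By the Poisson/Hilbert-transform representation, a holomorphic function on $B_1$ is recovered from its imaginary boundary values up to an additive real constant, and by Privalov's theorem the Hilbert transform preserves Hölder regularity on the circle. Therefore, once $s(\theta)$ is shown to be $C^{k,\alpha}(\partial B_1)$, we obtain $\log f' \in C^{k-1,\alpha}(\overline{B}_1)$, hence $f' \in C^{k-1,\alpha}(\overline{B}_1)$, hence $f \in C^{k,\alpha}(\overline{B}_1)$.

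The hard part is bootstrapping the regularity of $s(\theta)$. One must first establish that $|f'(e^{i\theta})|$ is bounded above and bounded away from zero using Koebe-type distortion estimates and boundary-behavior lemmas for conformal maps onto $C^1$ domains. With this initial control, crude Hölder regularity of $\tau \circ s$ feeds into the Hilbert-transform step via Privalov, which upgrades the regularity of $s'(\theta) = |f'(e^{i\theta})|$; iterating this bootstrap yields $C^{k-1,\alpha}$ regularity of $s$ and then of $\log f'$. The analogous statement on an annulus (which will be the actual \emph{technical tool} in this section) should then be obtained by combining Kellogg--Warschawski on each boundary circle with Schwarz reflection and a standard partition-of-unity argument, since the doubly connected geometry of $B_1 - \overline{B}_\beta$ makes the reflection principle across each component available in a collar neighborhood.
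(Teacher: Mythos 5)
This statement is not proved in the paper at all: it is quoted verbatim as Theorem 3.6 of Pommerenke's \emph{Boundary Behaviour of Conformal Maps} (\cite{MR1217706}) and used as a black box, so there is no internal proof to compare against. Your sketch is, in outline, the classical Warschawski argument that the cited source itself uses: Carath\'eodory extension, the identity $\operatorname{Im}\log f'(e^{i\theta})=\tau(s(\theta))-\theta-\tfrac{\pi}{2}$, recovery of $\log f'$ from its boundary imaginary part via the conjugate function, Privalov's theorem to preserve H\"older classes, and a bootstrap on $s(\theta)$ with $s'(\theta)=\abs{f'(e^{i\theta})}$. Two caveats are worth making explicit if you were to write this out. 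First, Privalov's theorem requires $0<\alpha<1$ (the conjugate function does not preserve $C^0$ or Lipschitz), so the standing assumption on $\alpha$ is essential and should be stated. Second, the step you label ``the hard part'' --- establishing that $f'$ extends continuously to $\overline{B}_1$ and that $\abs{f'}$ is bounded above and below away from zero for $C^{1,\alpha}$ Jordan domains --- is genuinely the core of the theorem (it is where the Jordan-curve and one-sided-domain hypotheses enter), and a Koebe distortion estimate alone does not deliver it; one needs a quantitative boundary-behaviour lemma such as Warschawski's comparison of $\abs{f(z_1)-f(z_2)}$ with $\abs{z_1-z_2}$ near the boundary. Your closing remark about the annulus is consistent with how the paper actually deploys the theorem: in \autoref{technicalmellasquareroot} and \autoref{c3formula} the author composes with a Riemann map of the outer Jordan domain, applies Kellogg--Warschawski there, and uses Schwarz reflection to transfer $C^{3,\alpha}$ regularity to the map on $B_1-\overline{B}_{\beta}$.
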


\autoref{keggtheorem} is a key ingredient in proving \autoref{technicalmellasquareroot}.

\begin{lemma}\label{technicalmellasquareroot}
Let $f \in B_{1} - \overline{B}_{\beta} \to \mathbb{C}$ be an orientation preserving biholomorphic map such that $f(B_1- \overline{B}_{\beta}) = \Omega - K$ for some compact subset $K$ of some bounded open $\Omega$. Then there exists a holomorphic $g \colon B_1 - \overline{B}_{\beta} \to \mathbb{C}$ such that $g^2= 1/\partial_zf$.
\end{lemma}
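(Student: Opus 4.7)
The plan is to reduce to a topological winding-number computation and then close via Hopf's Umlaufsatz (theorem of turning tangents).

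Since $f$ is a biholomorphism, $\partial_z f = f'$ is holomorphic and nowhere vanishing on the annulus $A := B_1 - \overline{B}_{\beta}$, and equivalently so is $1/\partial_z f$. It therefore suffices to construct a holomorphic square root of $\partial_z f$ and then take the reciprocal. By the standard monodromy criterion, a nowhere-vanishing holomorphic function on $A$ admits a holomorphic square root if and only if its winding number around $0$, along a generator of $\pi_1(A) \cong \mathbb{Z}$, is an even integer; so I only need to verify this for $\partial_z f$ along one loop.

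I would take the generating loop $\gamma_r(t) := re^{it}$, $t \in [0, 2\pi]$, for an arbitrary $r \in (\beta, 1)$, and use the identity
\[
(f \circ \gamma_r)'(t) = (\partial_z f)(re^{it}) \cdot i r e^{it}.
\]
Reading off winding numbers around $0$ and using multiplicativity, one obtains
\[
n(\partial_z f \circ \gamma_r, 0) = n\bigl((f \circ \gamma_r)', 0\bigr) - 1.
\]
Because $f$ is biholomorphic and $\gamma_r$ is a smooth simple closed curve sitting in the interior of $A$, the image $f \circ \gamma_r$ is again a smooth simple closed curve in $\mathbb{C}$ with nowhere-vanishing tangent, and Hopf's Umlaufsatz forces $n((f \circ \gamma_r)', 0) = \pm 1$ depending on orientation. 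Hence $n(\partial_z f \circ \gamma_r, 0) \in \{0, -2\}$, which is even in either case, so a holomorphic square root $h$ of $\partial_z f$ exists on $A$; setting $g := 1/h$ then gives a holomorphic function with $g^2 = 1/\partial_z f$, as required.

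The only subtle point I anticipate is orientation: $f$ may either preserve or swap the two boundary components of $A$, which changes the orientation of the image loop and thus the sign in the Umlaufsatz, but the computation $\pm 1 - 1 \in \{0, -2\}$ handles both possibilities uniformly. \autoref{keggtheorem} would enter if one preferred to extend $f$ continuously to the closed annulus and read the winding off the boundary circles; the interior-loop approach sketched above avoids any regularity hypothesis on $\partial \Omega$ or $\partial K$ beyond what the biholomorphism automatically provides.
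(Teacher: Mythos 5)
Your proof is correct, and it takes a genuinely different route from the paper. The paper reduces to a map that fixes a circle: it applies the Riemann mapping theorem to the bounded component $\Omega_1$ of the complement of $f(\partial B_r)$, invokes the Kellogg--Warschawski theorem to extend that Riemann map to the boundary, and then uses the identity $\partial_t\bigl[(h\overline h)(re^{it})\bigr]=0$ for $h=f_1\circ f$ on $\partial B_r$ to show that $\mathrm{Ind}_{\partial_z h(\gamma)}(0)=\mathrm{Ind}_{h(\gamma)}(0)-\mathrm{Ind}_{\gamma}(0)=0$; combined with $\partial_z f=\partial_z h/(\partial_z f_1\circ f)$ this gives the \emph{stronger} conclusion that $\log\partial_z f$ is single-valued on the annulus, from which the square root follows. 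You instead stay entirely in the interior: from $(f\circ\gamma_r)'(t)=\partial_z f(re^{it})\cdot ire^{it}$ you get $n(\partial_z f\circ\gamma_r,0)=n((f\circ\gamma_r)',0)-1$, and since $f\circ\gamma_r$ is a regular simple closed curve (injectivity of $f$ plus $f'\neq 0$), Hopf's Umlaufsatz gives turning number $\pm 1$, hence an even winding number $0$ or $-2$ for $\partial_z f$, which by the monodromy criterion is exactly what a holomorphic square root requires. Your handling of the orientation ambiguity is right: both signs genuinely occur (e.g.\ $f(z)=1/z$ gives turning number $-1$ and winding $-2$), and both are even. What each approach buys: yours is shorter, avoids the auxiliary Riemann map and all boundary-regularity machinery, and needs no smoothness of $\partial\Omega$ or $K$; the paper's argument yields the sharper statement that the index of $\partial_z f$ is exactly zero (a global logarithm exists), though that extra strength is not used later --- the Laurent expansion of $g$ in Theorem~\ref{Mainresultannulus1} and Proposition~\ref{keyc3inequalityannulus} only needs $g$ to be holomorphic on the annulus.
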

\begin{proof}
Fix $\beta < r < 1$. Let $\Omega_1$ be the bounded component of the complement of $f(\partial B_r)$, and let $f_1 \colon \Omega_1 \to B_r$ be an orientation preserving biholomorphic map, whose existence is guaranteed by the Riemann mapping theorem. By \autoref{keggtheorem}, $f_1$ extends to a smooth diffeomorphism between $\overline{\Omega}_1$ and $\overline{B}_r$. Consider $h = f_1 \circ f$, which is an orientation preserving diffeomorphism from $\overline{B}_r - \overline{B}_\beta$ to $\overline{B}_r - f_1(K)$.  \par 
 
Since $h$ restricts to a self-homeomorphism of $\partial B_r$,
\begin{equation}\label{zeroimaginarypart}
0 = \frac{\partial}{\partial t}\bigl[(h\overline{h})(e^{it})\bigr] = 2 \real\bigl(i\partial_zh(e^{it})\cdot e^{it}\cdot  \overline{h}(e^{it})\bigr) = 2 \imaginary\bigl( \frac{ \partial_zh(e^{it})\cdot  e^{it}}{h(e^{it})}\bigr).
\end{equation}
By \eqref{zeroimaginarypart}, there is $c\colon \mathbb{R} \to \mathbb{R}$ such that
\begin{equation}\label{technicallemmascalign1}
\frac{\partial_zh(e^{it})\cdot  e^{it}}{h(e^{it})} = c(t) > 0,
\end{equation}
where the inequality follows from $h$ being orientation preserving. Let $\gamma \colon [0, 2\pi] \to \mathbb{C}$ be a closed path defined by $\gamma(t) = e^{it}$. It follows that 
\begin{equation}\label{Indexcalculation}
\begin{split}
\text{Ind}_{\partial_z h( \gamma)}(0) &= \text{Ind}_{h(\gamma)/\gamma}(0)\\
&= \frac{1}{2 \pi i}\int_0^{2 \pi} \frac{\partial_z h(\gamma)\cdot \gamma_t/\gamma - h(\gamma)/\gamma^2 \cdot \gamma_t}{h(\gamma)/\gamma}\dd{t}\\
&= \frac{1}{2 \pi i}\int_0^{2 \pi}  \frac{\partial_z h(\gamma)\cdot \gamma_t}{h(\gamma)}- \frac{\gamma_t}{\gamma}\dd{t}\\
&= \text{Ind}_{h(\gamma)}(0)- \text{Ind}_{\gamma}(0) = 0
\end{split}
\end{equation}
where $\text{Ind}$ is the index of a point with respect to a closed path and the first equality follows from \eqref{technicallemmascalign1}.\par 
 By \cite{MR924157}*{Theorem 13.11}, \eqref{Indexcalculation} shows that $\log(\partial_z h)$ is well defined on $B_{r} - \overline{B}_\beta$. Since $f_1$ is defined on $\Omega_1$, which is a simply connected domain, \cite{MR924157}*{Theorem 13.11} implies that $\log \partial_zf_1$ is well defined on $\Omega_1$. Hence
 \begin{equation}
 	\partial_zf(z)= \frac{\partial_z h(z)}{\partial_zf_1(f(z))}=  \exp \bigl(\log \partial_z h(z) - \log \partial_z f_1 (f(z))\bigr),\label{Indexcalculation1}
 \end{equation}
for $z \in B_r - \overline{B}_\beta$. 
Invoking \cite{MR924157}*{Theorem 13.11} once again, \eqref{Indexcalculation1} implies that $\text{Ind}(\partial_z f(\gamma))(0) = 0$, therefore $\log \partial_z f$ is well defined on $B_1 - \overline{B}_\beta$. In particular, there exists a holomorphic $g \colon B_1 - \overline{B}_\beta \to \mathbb{C}$ such that $g^2 = 1/\partial_z f$.
\end{proof}

\section{doubly-connected Domains}
\label{oneconnecteddomains}
In this section, we prove our main theorem. In the simply connected case, it has been shown by \cite{MR4308060}.\par  
Let $u $ be a solution to \eqref{Liouvilleinftyboundaryproblem} on $\Omega$. Let $f \colon B_{1}- \overline{B}_{\ratioannu} \to \Omega$ be a biholomorphic function, and $u_{\ratioannu}$ be the solution to \eqref{Liouvilleinftyboundaryproblem} for $\Omega_{\ratioannu} = B_1 - \overline{B}_{\ratioannu}$, then $(\Omega, (f^{-1})^\ast e^{2u_{\ratioannu}} g_E)$ is complete with Gaussian curvature $-1$, hence
\begin{equation}\label{uconformalrelation}
	u = (u_{\ratioannu} - \ln \abs{f_z}) \circ f^{-1}
\end{equation}
is a solution to \eqref{Liouvilleinftyboundaryproblem} in $\Omega$. For $v = e^{-u}$ and $v_\beta = e^{-u_\beta}$, this relation becomes  
\begin{equation}\label{vconformalrelation}
	v = (v_{\ratioannu} \cdot \abs{f_z}) \circ (f^{-1}).
\end{equation}\par 
\begin{remark}
We quote the following explicit solution from \cite{MR4308060}.
The solution to \eqref{Liouvillevequation} on $B_1 - \overline{B}_{\ratioannu}$ is explicitly written as follows,
\begin{equation}
	v_{\ratioannu}(r) = -r \cdot \biggl(\frac{1}{\pi}\ln \ratioannu\biggr)\cdot \sin \frac{\ln r}{1/\pi \cdot \ln \ratioannu}.
\end{equation}
At $\partial B_1$, it has expansion
\begin{equation}\label{annulusc3expansion}
	v_{\ratioannu} = d - \frac{1}{2} d^2 - \frac{1}{6} \biggl[\biggl( \frac{\pi}{\ln\ratioannu}\biggr)^2 + 1 \biggr]d^3  + O(d^4),
\end{equation}
where $d(x) = 1-\lvert x \rvert$, and the first global term of $v_{\ratioannu}$ near $\partial B_1$ is 
\begin{equation}\label{firstglobalterm3beta3}
c_{\ratioannu,3} =  - \frac{1}{6} \biggl[\biggl( \frac{\pi}{\ln\ratioannu}\biggr)^2 + 1 \biggr].
\end{equation}
\end{remark}
The following lemma is proved in \cite{MR4308060}, here we include it for completeness.
\begin{lemma}\label{c3formula}
	Let $\Omega$ be a bounded domain with $C^{3,\alpha}$ outermost boundary component $\mathcal{C}$.
	Let $f \colon B_1 - \overline{B}_{\ratioannu} \to \Omega$ be a biholomorphic map. Let $c_3, c_{\ratioannu,3}$ be the first global terms of the solutions $v,v_\beta$ to \eqref{Liouvillevequation} on $\Omega, B_1- \overline{B}_{\ratioannu}$ near $\mathcal{C}, \partial B_1$ respectively. If $f$ takes $\partial B_1$ to $\mathcal{C}$, then  
\begin{equation}
\int_{\mathcal{C}} -6 c_3 \dd{l} = \int_{\partial B_{1}}  \frac{-6 c_{\ratioannu,3} }{\abs{f_z}} + 2 \pi\fint_{\partial B_{1}} \pdv[order = 2]{}{r} \frac{1}{\abs{f_z}} - 2 \pi 	\fint_{\partial B_{1}} \pdv{}{r} \frac{1}{\abs{f_z}}.
\end{equation}
\end{lemma}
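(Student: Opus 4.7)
The plan is to derive a pointwise identity for $c_3$ at each $y_0 = f(e^{i\theta}) \in \mathcal{C}$ and then integrate against $dl|_{\mathcal{C}} = |f_z(e^{i\theta})|\,d\theta$. Conformality of $f$ yields the transformation rule $v \circ f = v_\beta \cdot |f_z|$, so substituting the radial path $w(t) = (1-t)e^{i\theta}$ and using the explicit expansion $v_\beta(1-t) = t - \tfrac12 t^2 + c_{\beta,3} t^3 + O(t^4)$ from \eqref{annulusc3expansion}, together with the Taylor series $|f_z((1-t)e^{i\theta})| = A(\theta) + B(\theta) t + C(\theta) t^2 + O(t^3)$, gives the third-order Taylor expansion of $v(z(t))$ along the image curve $z(t) = f(w(t))$.

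Independently, expand $v(z(t))$ using \eqref{boundaryexpansionv}. Since $f$ is conformal, $\dot z(0) = -e^{i\theta} f_z(e^{i\theta})$ is purely in the inward normal direction to $\mathcal{C}$ at $y_0$ with speed $A$; decomposing $\ddot z(0) = e^{2i\theta} f_{zz}$ and $z'''(0) = -e^{3i\theta} f_{zzz}$ against the local frame $\{i e^{i\theta} f_z/A,\, -e^{i\theta} f_z/A\}$ determines $d_\Omega(z(t)) = At + \tfrac{B}{2} t^2 + \tfrac{P}{6} t^3 + O(t^4)$ with $P = A\,\mathrm{Re}(e^{2i\theta} f_{zzz}/f_z)$, and the boundary parametrization $\theta \mapsto f(e^{i\theta})$ yields $\kappa(y_0) = (1 + \mathrm{Re}(e^{i\theta} f_{zz}/f_z))/A$. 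Matching the $t^3$ coefficients of $v = d_\Omega - \tfrac12 \kappa d_\Omega^2 + c_3 d_\Omega^3 + O(d_\Omega^{3+\alpha})$ with the first computation produces a pointwise formula for $c_3(\theta)$ in terms of $A, B, C, A', P$, and $c_{\beta,3}$.

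Multiplying this formula by $-6 A(\theta)\, d\theta = -6\,dl$ and integrating over $[0, 2\pi]$, I next reorganize the integrand into radial derivatives of $1/|f_z|$ at $r=1$. The two key tools are: (i) harmonicity of $\log|f_z|$ on $B_1 - \overline{B}_\beta$, which follows from the single-valuedness of $\log f_z$ established as an intermediate step in the proof of \autoref{technicalmellasquareroot}, producing the identity $\partial_r^2 \log|f_z| + \partial_r \log|f_z| + \partial_\theta^2 \log|f_z| = 0$ on $\partial B_1$; and (ii) integration by parts in $\theta$ along the closed curve $\partial B_1$, which kills any total angular derivative. After these manipulations, the isotropic $c_{\beta,3}$ piece contributes $\int_{\partial B_1} -6 c_{\beta,3}/|f_z|$, while the remaining terms assemble into $\int_{\partial B_1} \partial_r^2 (1/|f_z|) - \int_{\partial B_1} \partial_r (1/|f_z|)$, which is precisely the claim once rewritten with the $2\pi \fint$ normalization.

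The main obstacle is the algebraic bookkeeping in this last step: one needs to combine the harmonicity identity above with angular integration by parts in such a way that the cross terms among $B$, $A'$, $C$, and $P$ cancel exactly, leaving the clean radial-derivative expression in $1/|f_z|$. Everything else—Taylor expansion of $v_\beta$ and $|f_z|$, the tangent/normal decomposition via conformality, and the standard parametrization formula for $\kappa$—is routine once the single-valuedness of $\log f_z$ from \autoref{technicalmellasquareroot} is in hand.
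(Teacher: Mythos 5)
Your proposal is correct but takes a genuinely different route from the paper. The paper's proof starts from the coordinate-free identity $-6c_3 = \partial_N \Delta v$ (imported from Shen--Wang), pulls the boundary integral back through $f$ using the conformal covariance $\Delta_w = \abs{f_z}^{-2}\Delta_z$ together with $v\circ f = v_\beta\,\abs{f_z}$, and then expands $\partial_r\bigl[\abs{f_z}^{-2}\Delta(v_\beta\abs{f_z})\bigr]$ using the explicit boundary values $\partial_r v_\beta = -1$, $\Delta v_\beta = -2$, $\partial_r\Delta v_\beta = -6c_{\beta,3}$; the Jacobian factors cancel immediately and the computation stays short. You instead match Taylor coefficients of $v(f((1-t)e^{i\theta}))$ along radial rays, which avoids invoking the $\partial_N\Delta v$ identity entirely at the cost of heavier bookkeeping. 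I checked that your plan closes: writing $\phi = \log\abs{f_z}$ (harmonic on the annulus, since $\log f_z$ is single-valued by the index argument in the proof of the square-root lemma) and $A = e^\phi$ on $\partial B_1$, your coefficients become $B=-A\phi_r$, $C=\tfrac{A}{2}(\phi_{rr}+\phi_r^2)$, $P=A(\phi_{rr}+\phi_r^2-\phi_\theta^2)$, the matching at order $t^3$ gives $-6c_3A = -6c_{\beta,3}/A + A^{-1}(-2\phi_{rr}+\phi_r^2-\phi_\theta^2)$, and subtracting the target integrand $-6c_{\beta,3}/A + \partial_r^2(1/\abs{f_z}) - \partial_r(1/\abs{f_z}) = -6c_{\beta,3}/A + A^{-1}(\phi_r^2-\phi_{rr}+\phi_r)$ and using $\phi_{rr}+\phi_r=-\phi_{\theta\theta}$ at $r=1$ leaves exactly $e^{-\phi}(\phi_{\theta\theta}-\phi_\theta^2) = -\partial_\theta^2\bigl(e^{-\phi}\bigr)$, a total derivative that integrates to zero over the circle. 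Two points you should make explicit when writing this up: (i) identifying $d_\Omega(z(t))$ with the normal component of $z(t)-y_0$ up to $O(t^4)$ is legitimate only because the tangential drift of $z(t)$ is $O(t^2)$, so the curvature correction $-\tfrac{\kappa}{2}s^2$ in the expansion of the distance function enters at order $t^4$; and (ii) the second-order Taylor expansion of $\abs{f_z}$ at $r=1$ requires the $C^{3,\alpha}$ extension of $f$ up to $\partial B_1$, which the paper obtains from the Kellogg--Warschawski theorem combined with Schwarz reflection, and which your argument should cite as well.
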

\begin{proof}
By \cite{MR4308060}, the $c_3$ term in the expansion of $v$ can be expressed, in a way independent of the distance function $d(x)$, as
\begin{equation}
	-6 c_3 = \partial_N \Delta v,
\end{equation}
where $N$ is the outward pointing normal and $\Delta$ is the Laplacian under metric $(\Omega, g_E)$. \par 
Let $w$ and $z$ be the standard euclidean coordinate on $\Omega$ and $B_1 - \overline{B}_{\ratioannu}$ respectively. Then 
\begin{equation}
\label{c3expressionintermedaite}
\begin{split}
	\int_{\mathcal{C}} -6 c_3 \dd{l}
	&=\int_{f(\partial B_1)} \partial_N \Delta_w v \dd{l} = \int_{\partial B_1} f^\ast (\partial_N \Delta_w v \cdot \dd{l})\\
	&= \int_{\partial B_1} ((f^{-1})_\ast \partial_N) (\Delta_w v  \circ f) f^\ast \dd{l} \\
	&= \int_{\partial B_1} \frac{1}{\abs{f_z}} \pdv{}{r} \bigl[\Delta_{\abs{f_z}^2 g_E} (v_\beta \cdot \abs{f_z})\bigr] \abs{f_z} \dd{l}\\
		&= \int_{\partial B_1}  \pdv{}{r} \biggl[\frac{1}{\abs{f_z}^2}\Delta_{g_E}(v_\beta \cdot \abs{f_z})\biggr] \dd{l}.\\
	\end{split}
\end{equation}
We can compute the terms involving with $v_\beta$  on $\partial B_1$ explicitly: \begin{equation}
	\partial_r v_{\ratioannu}  = -1, \qquad \Delta_z v_{\ratioannu}  = -2, \qquad \partial_r \Delta_z v_{\ratioannu}  = -6 c_{\ratioannu,3} = \biggl[\biggl( \frac{\pi}{\ln \ratioannu}\biggr)^2 + 1 \biggr].
\end{equation}
Thus
\begin{equation}\label{c3expressioninterbf}
	\begin{split}
		\int_{\mathcal{C}} -6 c_3 \dd{l}
		&= \int_{\partial B_{1}} \biggl(\pdv{}{r} \frac{1}{\abs{f_z}} \biggr) \Delta v_{\ratioannu} + \int_{\partial B_{1}} \frac{1}{\abs{f_z}} \pdv{}{r}\Delta v_{\ratioannu} - 2 \int_{\partial B_{1}} \pdv[order = 2]{}{r} v_{\ratioannu} \pdv{}{r} \frac{1}{\abs{f_z}}\\
		& \quad  -2 \int_{\partial B_{1}} \pdv{}{r} v_{\ratioannu} \pdv[order = 2]{}{r} \frac{1}{\abs{f_z}} + \int_{\partial B_{1}} \pdv{}{r} v_{\ratioannu} \Delta \frac{1}{\abs{f_z}} + \int_{\partial B_{1}} v_{\ratioannu} \pdv{}{r} \Delta \frac{1}{\abs{f_z}}\\
		&=-6 c_{\ratioannu,3} \int_{\partial B_{1}}  \frac{1}{\abs{f_z}} +2 \int_{\partial B_{1}} \pdv[order = 2]{}{r} \frac{1}{\abs{f_z}} - \int_{\partial B_{1}} \Delta \frac{1}{\abs{f_z}}\\
		&= \int_{\partial B_{1}}  \frac{-6 c_{\ratioannu,3} }{\abs{f_z}} + 2 \pi\fint_{\partial B_{1}} \pdv[order = 2]{}{r} \frac{1}{\abs{f_z}} - 2 \pi 	\fint_{\partial B_{1}} \pdv{}{r} \frac{1}{\abs{f_z}}. 
	\end{split}
\end{equation}
In the second equality in \eqref{c3expressioninterbf}, we used the fact that $f$ extends to a $C^{3,\alpha}$ diffeomorphism from $\overline{B}_1 - B_\beta$ onto its image. The regularity of the extension is an easy application of \autoref{keggtheorem}, which indicates the existence of a $C^{3,\alpha}$ diffeomorphism $f_1$ from the closure of the region bounded by the $C^{3,\alpha}$ curve $\mathcal{C}$ to $\overline{B}_1$. The composition $f_1 \circ f$ is then a map from $B_1 - \overline{B}_{\beta}$ to $B_1 - K$ for some compact $K \subset B_1$ that sends $\partial B_1$ to $\partial B_1$. An application of the Schwarz reflection principle \cite{MR924157}*{Theorem 11.14} extends $f_1 \circ f$ to a $C^{3,\alpha}$ function on $\overline{B}_1 - \overline{B}_{\beta}$. Composing $(f_1)^{-1}$ with this extension of $f_1 \circ f$ gives us the desired $C^{3,\alpha}$ extension of $f$.
\end{proof}

The following theorem is an improvement of \ref{gaptheoremSHEN} in \autoref{Shenwangresult}.
\begin{theorem}\label{Mainresultannulus1}
	Let $\Omega$ be a doubly-connected domain with the outermost boundary component $\mathcal{C}$ being $C^{3,\alpha}$. If $\Omega$ is biholomorphic to $B_1 - \overline{B}_{\ratioannu}$, then 
	\begin{equation}\label{Mainresultinequality}
		\lambda(\Omega,v) \geq \frac{2 \pi^2}{3}\biggl[\biggl( \frac{\pi}{\ln\ratioannu}\biggr)^2 + 1 \biggr].
	\end{equation}
\end{theorem}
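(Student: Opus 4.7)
The plan is to use Lemma 3.1 to reduce everything to boundary integrals on $\partial B_1$ involving $h := |f_z|$ and its radial derivatives, and then combine a Cauchy-Schwarz step with a Laurent series argument enabled by Lemma 2.2. Writing $\ell := (\pi/\ln\beta)^2 + 1 = -6c_{\beta,3} > 0$ and multiplying the identity of Lemma 3.1 by $\mathrm{length}(\mathcal{C}) = \int_{\partial B_1} h\,dl$, one obtains
\begin{align*}
6\lambda(\Omega,v) = \ell\left(\int_{\partial B_1}\!h\,dl\right)\!\left(\int_{\partial B_1}\!\frac{1}{h}\,dl\right) + \left(\int_{\partial B_1}\!h\,dl\right)\!\left[\int_{\partial B_1}\!\partial_r^2\frac{1}{h}\,dl - \int_{\partial B_1}\!\partial_r\frac{1}{h}\,dl\right].
\end{align*}
Since the target inequality reads $6\lambda(\Omega,v) \geq 4\pi^2\ell$, the strategy is to show that the first summand alone already attains this bound and that the bracket in the second summand is non-negative.

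For the first summand, the Cauchy-Schwarz inequality applied to $\sqrt{h}$ and $1/\sqrt{h}$ yields $\int_{\partial B_1} h\,dl \cdot \int_{\partial B_1}(1/h)\,dl \geq \left(\int_{\partial B_1} 1\,dl\right)^2 = 4\pi^2$, so the first summand is at least $4\pi^2\ell$, exactly the needed quantity. For the second summand, I invoke Lemma 2.2 to obtain a holomorphic $g\colon B_1 - \overline{B}_\beta \to \mathbb{C}$ with $g^2 = 1/\partial_z f$, so that $|g|^2 = 1/h$. Since $f$ extends to a $C^{3,\alpha}$ diffeomorphism up to $\overline{B}_1 - B_\beta$ (as already noted in the proof of Lemma 3.1) and $f_z$ never vanishes on the closed annulus, $g$ admits a Laurent expansion $g(z) = \sum_{n \in \mathbb{Z}} a_n z^n$ that is sufficiently regular up to $\partial B_1$ to justify the termwise radial differentiation below. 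Writing $z = re^{i\theta}$ and using that $\theta$-averaging annihilates the off-diagonal exponentials $e^{i(n-m)\theta}$, a direct computation on $r = 1$ gives
\begin{align*}
\int_{\partial B_1}\partial_r\frac{1}{h}\,dl &= 2\pi\sum_{n \in \mathbb{Z}} 2n\,|a_n|^2, \\
\int_{\partial B_1}\partial_r^2\frac{1}{h}\,dl &= 2\pi\sum_{n \in \mathbb{Z}} 2n(2n-1)\,|a_n|^2,
\end{align*}
so that the bracket equals $8\pi\sum_{n \in \mathbb{Z}} n(n-1)\,|a_n|^2$. Since $n(n-1) \geq 0$ for every integer $n$, the bracket is non-negative, and combining this with the Cauchy-Schwarz bound completes the proof.

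The main obstacle, and the key innovation over the approach of Shen-Wang, is the existence of the global holomorphic square root $g$ furnished by Lemma 2.2: without it, the Fourier coefficients of $1/h$ would not be the non-negative quadratic forms $|a_n|^2$, and the decisive sign observation $n(n-1) \geq 0$ would be unavailable. The subsidiary technicalities — namely justifying the termwise radial differentiation of the Laurent series up to $r = 1$ and the vanishing of off-diagonal Fourier modes at the boundary — are routine given the $C^{3,\alpha}$ regularity already in force, and can be handled by approximating from inside the annulus and passing to the limit using the continuity of $\partial_r^2(1/h)$ and $\partial_r(1/h)$ up to $\partial B_1$.
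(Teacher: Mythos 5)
Your proposal is correct and follows essentially the same route as the paper's proof: the identity of Lemma 3.1 combined with the holomorphic square root from Lemma 2.2, the Laurent-coefficient computation showing the radial bracket equals a non-negative sum $\sum n(n-1)|a_n|^2$ (the paper writes this as $\sum 2k(2k-2)|b_k|^2 r^{2k}\geq 0$ and lets $r\to 1$), and the Cauchy--Schwarz/H\"older step $\int h\cdot\int h^{-1}\geq 4\pi^2$. The only cosmetic difference is that you work directly at $r=1$ with a regularity justification where the paper takes an interior limit; the substance is identical.
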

\begin{proof}
Let $f \colon B_1- \overline{B}_{\ratioannu} \to \Omega$ be an orientation preserving biholomorphic map that takes $\partial B_1$ to $\mathcal{C}$. 
By \autoref{c3formula},
\begin{equation}\label{Mainresultannulus1pat0}
	\int_{\mathcal{C}} -6 c_3 \dd{l} = \int_{\partial B_{1}}  \frac{-6 c_{\ratioannu,3} }{\abs{f_z}} + 2 \pi\fint_{\partial B_{1}} \pdv[order = 2]{}{r} \frac{1}{\abs{f_z}} - 2 \pi 	\fint_{\partial B_{1}} \pdv{}{r} \frac{1}{\abs{f_z}}.
\end{equation}
 By \autoref{technicalmellasquareroot}, there is holomorphic $g \colon B_1 - \overline{B}_{\beta} \to \mathbb{C}$ such that $g^2=1/f_z$. \par 

 Consider the Laurent expansion of $g$, 
\begin{equation}
g = \sum_{k= -\infty}^\infty b_k z^k,
\end{equation} which leads to,  
\begin{equation}
	\fint_{\partial B_r} \abs{g^2} = 
\frac{1}{2 \pi r}\int_{\partial B_r} \abs{g^2}= \frac{1}{2 \pi r}\int_{\partial B_r} g \cdot \overline{g} = \sum_{k = -\infty}^\infty \abs{b_k}^2 r^{2k}.
\end{equation}
Therefore,
\begin{equation}
	\begin{split}\label{Mainresultkeyinequality}
r^2\fint_{\partial B_{r}} \pdv[order = 2]{}{r} \abs{g^2} - r\fint_{\partial B_{r}} \pdv{}{r} \abs{g^2} &= \biggl(r^2\pdv[order = 2]{}{r} - r\pdv{}{r} \biggr)\sum_{k = -\infty}^\infty \abs{b_k}^2 r^{2k}\\
&= \sum_{k = -\infty}^\infty \abs{b_k}^2 2k(2k - 2) r^{2k} \geq 0.
	\end{split}
\end{equation}\par  
Let $r \to 1$, we conclude from \eqref{Mainresultannulus1pat0} and \eqref{Mainresultkeyinequality} that 
\begin{equation}
	\begin{split}
	\int_{\mathcal{C}} -6 c_3 \dd{l} &= \int_{\partial B_{1}}  \frac{-6 c_{\ratioannu,3} }{\abs{f_z}} + 2 \pi\fint_{\partial B_{1}} \pdv[order = 2]{}{r} \frac{1}{\abs{f_z}} - 2 \pi 	\fint_{\partial B_{1}} \pdv{}{r} \frac{1}{\abs{f_z}}\\ &\geq \int_{\partial B_{1}}  \frac{-6 c_{\ratioannu,3} }{\abs{f_z}}.
	\end{split}
\end{equation}
Finally, by H\"{o}lder's inequality and \eqref{firstglobalterm3beta3},
\begin{equation}\label{c3expressionfinal}
	\begin{split}
		\lambda(\Omega,v) &\geq \frac{1}{6}\biggl[\biggl( \frac{\pi}{\ln\ratioannu}\biggr)^2 + 1 \biggr] \cdot \int_{\partial B_1} 
		\frac{1}{\abs{f_z}}\dd{l} \cdot \int_{\partial B_1} \abs{f_z}\dd{l}\\
		&  \geq \frac{1}{6}\biggl[\biggl( \frac{\pi}{\ln\ratioannu}\biggr)^2 + 1 \biggr] \biggl(\int_{\partial B_1} \frac{1}{\abs{f_z}^{1/2}} \cdot  \abs{f_z}^{1/2} \biggr)^2\\
		& = \frac{2 \pi^2}{3}\biggl[\biggl( \frac{\pi}{\ln\ratioannu}\biggr)^2 + 1 \biggr], 
	\end{split}
\end{equation}
completing the proof.
\end{proof}
The first part of \ref{positivemasstheoremSHEN} in \autoref{Shenwangresult} can be recovered by the same proof as in \autoref{Mainresultannulus1} with little modification. First, note that for a simply connected domain $\Omega$, and biholomorphic $f \colon B_1 \to \Omega$. \autoref{c3formula} holds after we change $c_{\ratioannu,3}$ to the first global term $c_{B_1, 3}$ of the solution $v_{B_1}$ to \eqref{Liouvillevequation} on $B_1$ near $\partial B_1$. $v_{B_1}$ takes the following form,
\begin{align}
v_{B_1}&= \frac{1-r}{2}, &  c_{B_1,3}&=0.
\end{align}
Then 
\begin{equation}
\int_{\mathcal{C}} -6 c_3 \dd{l} = 2 \pi\fint_{\partial B_{1}} \pdv[order = 2]{}{r} \frac{1}{\abs{f_z}} - 2 \pi 	\fint_{\partial B_{1}} \pdv{}{r} \frac{1}{\abs{f_z}},
\end{equation}
which is $\geq 0$ by \eqref{Mainresultkeyinequality}. Thus, we conclude that for any simply connected domain $\Omega$,
\begin{equation}
\lambda(\Omega,v) \geq 0.
\end{equation}
The other part of \ref{positivemasstheoremSHEN} in \autoref{Shenwangresult} will be recovered in the following section.
\section{Rigidity of doubly-connected Domains}
\label{rigidityofoneconnecteddomains}
In this section, we prove a rigidity theorem for doubly-connected domains.\par  
Let $f \colon B_1 - \overline{B}_{\ratioannu} \to \Omega$ be an orientation preserving biholomorphic map. Consider 
\begin{align}\label{annuluacaseAC3study}
	A(t) &= \frac{1}{2 \pi  e^t}\cdot \int_{\partial B_{ e^t}(0)}, \frac{1}{\abs{f_z}} & B(t) &= A_{tt}(t) - 2A_t(t),
\end{align}
for $t \in (\ln \ratioannu, 0)$.
\begin{lemma}
Let $r = e^t$, then
\begin{equation}\label{ABequivalence}
B(t) = r^2 \fint_{\partial B_r(0)} \pdv[order = 2]{}{r}\frac{1}{\abs{f_z}} - r \fint_{\partial B_r(0)}\pdv{}{r}\frac{1}{\abs{f_z}} 
\end{equation}
for $t\in (\ln \ratioannu,0)$.
\end{lemma}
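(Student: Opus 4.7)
The identity is essentially a bookkeeping exercise in the chain rule after the substitution $r = e^t$, so the plan is very direct. First I would rewrite $A(t)$ as a true average. Parametrizing $\partial B_{e^t}$ by $\theta \mapsto e^t e^{i\theta}$, the arc-length element is $e^t \, d\theta$, so
\begin{equation}
A(t) = \frac{1}{2\pi e^t}\int_{\partial B_{e^t}}\frac{1}{\abs{f_z}}\,dl = \frac{1}{2\pi}\int_0^{2\pi}\phi(e^t e^{i\theta})\,d\theta = \fint_{\partial B_r}\phi,
\end{equation}
where $\phi = 1/\abs{f_z}$ and $r = e^t$. Since $f$ is biholomorphic on $B_1-\overline{B}_\beta$, $f_z$ is holomorphic and nowhere vanishing there, so $\phi$ is smooth in $(r,\theta)$ on the annulus and differentiation under the integral sign is legitimate.

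Next I would differentiate in $t$, using $dr/dt = e^t = r$. The first derivative gives
\begin{equation}
A_t(t) = r\cdot \frac{1}{2\pi}\int_0^{2\pi}\partial_r\phi(r,\theta)\,d\theta = r\fint_{\partial B_r}\partial_r\phi.
\end{equation}
Differentiating once more, and again applying $dr/dt = r$ to both the explicit factor of $r$ and the integrand,
\begin{equation}
A_{tt}(t) = r\fint_{\partial B_r}\partial_r\phi + r^2\fint_{\partial B_r}\partial_r^2\phi = A_t(t) + r^2\fint_{\partial B_r}\partial_r^2\phi.
\end{equation}
Substituting into $B(t) = A_{tt}(t) - 2A_t(t)$ yields
\begin{equation}
B(t) = -A_t(t) + r^2\fint_{\partial B_r}\partial_r^2\phi = r^2\fint_{\partial B_r(0)}\partial_r^2\frac{1}{\abs{f_z}} - r\fint_{\partial B_r(0)}\partial_r\frac{1}{\abs{f_z}},
\end{equation}
which is exactly \eqref{ABequivalence}.

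There is no real obstacle here: the whole content is the identity $dr/dt = r$, which converts the logarithmic differential operator $\partial_t^2 - 2\partial_t$ into the Euler-type operator $r^2\partial_r^2 - r\partial_r$ appearing in \eqref{Mainresultkeyinequality}. The only thing worth flagging is that the manipulation implicitly uses smoothness of $\phi$ in a neighborhood of $\partial B_r$ for $\beta < r < 1$; this is immediate from $f$ being biholomorphic (hence $f_z$ holomorphic and nonvanishing) on the open annulus, so no regularity at $\partial B_1$ is needed for this particular lemma.
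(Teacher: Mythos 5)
Your proof is correct and follows essentially the same route as the paper: write $A(t)$ as the circle average of $1/\abs{f_z}$, apply the chain rule with $dr/dt = r$ to get $A_t = r\fint\partial_r(1/\abs{f_z})$ and $A_{tt} = r\fint\partial_r(1/\abs{f_z}) + r^2\fint\partial_r^2(1/\abs{f_z})$, and substitute into $B = A_{tt} - 2A_t$. The added remark justifying differentiation under the integral sign is a harmless (and welcome) extra detail that the paper leaves implicit.
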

\begin{proof}
For convenience, take 
		\begin{equation}
\phi(r) = \frac{1}{2 \pi r} \int_{\partial B_r(0)} \frac{1}{\abs{f_z}},
		\end{equation}
	so that $\phi(r) = A(t)$. For fixed $r \in (\ratioannu, 1)$, we have 
	\begin{equation}
			\pdv{}{t} A = e^t \cdot \biggl(\pdv{}{r} \phi \biggr)(r)= \frac{r}{2 \pi r} \int_{\partial B_r} \pdv{}{r} \frac{1}{\abs{f_z}} \dd{l},
	\end{equation}
	and 
	\begin{equation}
		\begin{split}
			\pdv[order = 2]{}{t} A &= e^t \cdot \biggl(\pdv{}{r} \phi \biggr)(r) + e^{2t}\biggl(\pdv[order = 2]{}{r} \phi \biggr)(r)\\
			&= \frac{r}{2 \pi r} \int_{\partial B_r} \pdv{}{r} \frac{1}{\abs{f_z}} \dd{l}+ \frac{r^2}{2 \pi r}\int_{\partial B_r} \pdv[order = 2]{}{r} \frac{1}{\abs{f_z}} \dd{l}.
		\end{split}
	\end{equation}	
Thus the claim follows.
\end{proof}
By \autoref{c3formula} and \eqref{ABequivalence}, we see that  
\begin{equation}
\lim_{t \to 0}	2\pi \cdot B(t) =
\int_{\mathcal{C}} -6 c_3 - \int_{\partial B_1} \frac{-6 c_{\ratioannu,3}}{\abs{f_z}},
\end{equation}
which in a rough sense, measures the difference between $\lambda(\Omega,v)$ and $\lambda(B_1 - \overline{B}_{\ratioannu},v_{\ratioannu})$.
 \autoref{keyc3inequalityannulus} gives a description of the corresponding holomorphic functions $f$ in the case when $B(0)$ vanishes.

\begin{proposition}\label{keyc3inequalityannulus}
Let $A(t)$ and $B(t)$ be defined by \eqref{annuluacaseAC3study}. Then
	\begin{equation}
		B(t) \geq 0,
	\end{equation}
	for all $t \in (\ln \ratioannu, 0)$. \par If for some $t_0\in (\ln \ratioannu,0)$, $B(t_0) = 0$ or $\lim_{t\to 0}B(t) = 0$, then 
	\begin{equation}
	B(t) = 0,
	\end{equation}
for all $t\in (\ln\ratioannu,0)$ and 
	\begin{equation}
		f(z) = C_1 + C_2 z	  \quad \text{or} \quad f(z) = C_1 + \frac{C_2}{z + C_3}.
	\end{equation}
\end{proposition}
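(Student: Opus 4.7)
The plan is to reduce the entire proposition to a single Laurent-series computation that has already essentially been carried out in the proof of Theorem~3.2. By Lemma~\ref{technicalmellasquareroot}, there exists a holomorphic $g\colon B_1-\overline{B}_\beta\to\mathbb{C}$ with $g^2=1/f_z$, and I would expand it as $g(z)=\sum_{k=-\infty}^\infty b_k z^k$. Combining equation~\eqref{ABequivalence} with the identity
\begin{equation*}
r^2\fint_{\partial B_r}\partial_r^2 |g^2|\;-\;r\fint_{\partial B_r}\partial_r |g^2|
\;=\;\sum_{k=-\infty}^\infty |b_k|^2\cdot 2k(2k-2)\,r^{2k},
\end{equation*}
derived from $\fint_{\partial B_r}|g|^2=\sum |b_k|^2 r^{2k}$ and the differential operator $r^2\partial_r^2-r\partial_r$ applied to $r^{2k}$, I would obtain
\begin{equation*}
B(t)\;=\;\sum_{k=-\infty}^\infty a_k\,e^{2kt},\qquad a_k:=|b_k|^2\cdot 2k(2k-2)\ge 0,
\end{equation*}
for $t\in(\ln\beta,0)$. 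The non-negativity of each $a_k$ (since $2k(2k-2)\ge 0$ for every integer $k$, with equality only at $k=0,1$) gives $B(t)\ge 0$ immediately.

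For the rigidity statement, the interior case is transparent: if $B(t_0)=0$ for some $t_0\in(\ln\beta,0)$, then each non-negative summand $a_k e^{2kt_0}$ must vanish, forcing $a_k=0$, hence $b_k=0$ for all $k\notin\{0,1\}$. The boundary case $\lim_{t\to 0^-}B(t)=0$ is the one subtle point. I would split the series into $k\ge 2$ and $k\le -1$. For $k\ge 2$ the terms $a_k e^{2kt}$ increase to $a_k$ as $t\to 0^-$, so monotone convergence gives $\sum_{k\ge 2}a_k=0$, hence $a_k=0$ for $k\ge 2$. For $k\le -1$ the terms $a_k e^{2kt}$ satisfy $a_k e^{2kt}\ge a_k$ for every $t<0$ (since $2kt>0$); taking $\liminf$ yields $\sum_{k\le -1}a_k\le 0$, and non-negativity forces each $a_k=0$ again. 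Thus $B(t)\equiv 0$ on $(\ln\beta,0)$, and $b_k=0$ for all $k\notin\{0,1\}$.

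It then remains to integrate. From $g(z)=b_0+b_1 z$ we get $f_z(z)=1/(b_0+b_1 z)^2$. If $b_1=0$ then $f_z$ is a nonzero constant and integration yields $f(z)=C_1+C_2 z$. If $b_1\ne 0$ then direct antidifferentiation gives
\begin{equation*}
f(z)\;=\;-\frac{1}{b_1(b_0+b_1 z)}+\text{const}\;=\;C_1+\frac{C_2}{z+C_3},
\end{equation*}
with $C_3=b_0/b_1$ and $C_2=-1/b_1^2$, matching the claimed form.

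The only step I expect to need care is the boundary limit: one must justify that the limit of the Laurent series $\sum a_k e^{2kt}$ as $t\to 0^-$ actually controls each coefficient. The monotonicity argument above handles this without invoking the full extension of $g$ to $\partial B_1$, and works cleanly because the $k=0,1$ terms are absent (their coefficients $2k(2k-2)$ vanish), so one never has to worry about canceling contributions. The entire argument therefore comes down to recognizing $B(t)$ as a Dirichlet-type series with non-negative coefficients.
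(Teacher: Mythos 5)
Your proposal is correct and follows essentially the same route as the paper: Lemma~\ref{technicalmellasquareroot} gives $g$ with $g^2=1/f_z$, the Laurent expansion yields $B(t)=\sum_k |b_k|^2\,2k(2k-2)\,e^{2kt}$ with non-negative terms, and vanishing of $B$ at an interior point or in the limit $t\to 0^-$ kills every coefficient with $k\notin\{0,1\}$, after which integration of $f_z=(b_0+b_1z)^{-2}$ gives the two stated forms. Your monotone-convergence treatment of the boundary case is a slightly more elaborate version of the paper's direct term-by-term bound $|b_k|^2\le r^{-2k}B(t)$, but it is the same idea.
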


\begin{proof}
The non-negativeness of $B$ follows from \eqref{Mainresultkeyinequality} and \eqref{ABequivalence}.
	
Let $f \colon B_1 - \overline{B}_{\ratioannu} \to \Omega$ be a biholomorphic orientation preserving function. By \autoref{technicalmellasquareroot}, there is holomorphic $g \colon B_1 - \overline{B}_\beta \to \mathbb{C}$ such that $g^2=1/f_z$. Consider Laurent expansions of $g$,
	\begin{equation}
		g = \sum_{k = -\infty}^\infty b_k z^k.
\end{equation}

By \eqref{Mainresultkeyinequality} and \eqref{ABequivalence}, 
\begin{equation}\label{Mainresultkeyinequalitynext}
	B(t) = \sum_{k = -\infty}^\infty \abs{b_k}^2 2k(2k - 2) r^{2k}.
\end{equation} 
In particular, for any $k \neq 0,1$,
\begin{equation}
\abs{b_k}^2 \leq r^{-2k}\cdot B(t)
\end{equation}
for all $t \in (\ln \ratioannu,0)$, $r = e^t$. By the assumption of \autoref{keyc3inequalityannulus}, either $B(t) =0$ for some $t$ or $\lim_{t \to 0} B(t) = 0$, it follows that,
\begin{equation}\label{zeroboundforceconverterm}
\abs{b_k}^2	 = 0.  
\end{equation}
for $k \neq 0,1$.\par 
	Then $f_z$ is of the form 
	\begin{equation}
		f_z = \frac{1}{(b_0 + b_1 z)^2}.
	\end{equation} 
	Suggesting that 
	\begin{equation}\label{maintheorem2pt1}
		f = \begin{cases}
			C + b_0^{-2} z 	& \text{ if }b_1 = 0\\
			C - b_1^{-2}\cdot \bigl(b_0/b_1 + z\bigr)^{-1} & \text{ if }b_1 \neq 0,
		\end{cases}
	\end{equation}
	for some constant $C \in \mathbb{C}$.\par 
	It is easy to check that when \eqref{maintheorem2pt1} holds, $B(t) = 0$ for all $t \in (\ln r, 0)$.
\end{proof}

\begin{theorem}\label{Mainresultannulus}
	Let $\Omega$ be a doubly-connected domain with $C^{3, \alpha}$ outermost boundary. If $\Omega$ is biholomorphic to $B_1 - \overline{B}_{\ratioannu}$, and 
	\begin{equation}\label{Mainresultequality}
		\lambda(\Omega,v) = \frac{2 \pi^2}{3}\biggl[\biggl( \frac{\pi}{\ln\ratioannu}\biggr)^2 + 1 \biggr],
	\end{equation}
 	then $\Omega$ can be obtained from $B_1 - \overline{B}_{\ratioannu}$ by a composition of translation and homotheties. 
\end{theorem}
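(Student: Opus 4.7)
The plan is to trace back through the two inequalities used in the proof of \autoref{Mainresultannulus1} and force equality in each. The chain there was
\begin{equation*}
\lambda(\Omega,v) \;=\; \tfrac{1}{6}\int_{\mathcal C}\dd l \cdot \int_{\mathcal C}(-6 c_3)\dd l \;\geq\; \tfrac{1}{6}\bigl[(\pi/\ln\ratioannu)^2+1\bigr]\int_{\partial B_1}\!|f_z|\dd l \cdot \int_{\partial B_1}\!\tfrac{1}{|f_z|}\dd l \;\geq\; \tfrac{2\pi^2}{3}\bigl[(\pi/\ln\ratioannu)^2+1\bigr],
\end{equation*}
where the first inequality uses $B(t)\geq 0$ and the limit $t\to 0$ (equivalently \autoref{keyc3inequalityannulus}), and the second is Cauchy--Schwarz. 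So the equality hypothesis \eqref{Mainresultequality} forces equality in both. The conclusion of \autoref{keyc3inequalityannulus} applied in the limiting case $\lim_{t\to 0}B(t)=0$ then gives the two explicit normal forms
\begin{equation*}
f(z)=C_1+C_2 z \qquad \text{or} \qquad f(z)=C_1+\frac{C_2}{z+C_3},
\end{equation*}
and Cauchy--Schwarz equality in $L^2(\partial B_1)$ forces $|f_z|$ to be constant along $\partial B_1$.

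The second step is to rule out the M\"obius case. For $f(z)=C_1+C_2/(z+C_3)$, one has $|f_z|=|C_2|/|z+C_3|^2$; expanding $|z+C_3|^2=1+|C_3|^2+2\operatorname{Re}(\overline{C_3}z)$ on $|z|=1$, constancy on $\partial B_1$ is equivalent to $C_3=0$. But if $C_3=0$ then $f(z)=C_1+C_2/z$ sends $\partial B_1$ to the circle of radius $|C_2|$ about $C_1$ and sends $\partial B_\ratioannu$ to the strictly larger circle of radius $|C_2|/\ratioannu$; hence $f(\partial B_1)$ is the \emph{inner} boundary of the image annulus, contradicting the requirement (built into the application of \autoref{c3formula}) that $f$ carry $\partial B_1$ onto the \emph{outermost} boundary $\mathcal C$ of $\Omega$. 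Therefore only the affine case $f(z)=C_1+C_2 z$ survives.

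It remains to read off the geometry. In this affine case the image is $\Omega=C_1+C_2(B_1-\overline{B}_\ratioannu)$; since $B_1-\overline{B}_\ratioannu$ is rotation-invariant, writing $C_2=|C_2|e^{i\theta}$ yields $\Omega=C_1+|C_2|(B_1-\overline{B}_\ratioannu)$, which is precisely a composition of a homothety (of ratio $|C_2|$ about the origin) with a translation (by $C_1$). This is the desired conclusion.

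The only nontrivial step is the exclusion of the M\"obius branch: one must be careful that equality in \autoref{keyc3inequalityannulus} allows the form $f(z)=C_1+C_2/(z+C_3)$ a priori, and the contradiction comes not from the constancy of $|f_z|$ alone but from combining it with the global topological constraint that $\partial B_1$ maps to $\mathcal C$. The rest is essentially bookkeeping on top of \autoref{keyc3inequalityannulus} and the Cauchy--Schwarz equality case.
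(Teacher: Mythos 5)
Your proof is correct and follows essentially the same route as the paper: force equality in both inequalities from the proof of \autoref{Mainresultannulus1}, invoke \autoref{keyc3inequalityannulus} to get the two normal forms for $f$, and use the equality case of Cauchy--Schwarz to make $\abs{f_z}$ constant on $\partial B_1$. The only (harmless) divergence is in the inversion branch: you exclude $f=C_1+C_2/z$ via the requirement that $\partial B_1$ map to the outermost boundary $\mathcal{C}$, whereas the paper keeps that case and simply observes that such an $f$ also sends the annulus to a translated and rescaled copy of $B_1-\overline{B}_{\ratioannu}$, so both dispositions reach the same conclusion.
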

\begin{proof}
	By the proof of \autoref{Mainresultannulus1} and \eqref{ABequivalence}, the equality \eqref{Mainresultequality} holds when $\lim_{t \to 0} B(t) = 0$ and the H\"{o}lder's inequality in \eqref{c3expressionfinal} is an equality. By \autoref{keyc3inequalityannulus}, $\lim_{t \to 0} B(t) = 0$ forces the biholomorphic map $f \colon B_1 - \overline{B}_{\ratioannu} \to \Omega$ to be a M\"obius transformation, hence $f_z$ is of the form,  \begin{equation}\label{lastbitofit}
		f_z = \frac{1}{(b_0+b_1z)^2}.
	\end{equation} The H\"{o}lder's inequality in \eqref{c3expressionfinal} becomes an equality when $\lvert f_z \rvert$ is a constant on $\partial B_1$, forcing either $b_0 = 0$ or $b_1 = 0$ in \eqref{lastbitofit}. We conclude that either $f = C_1 + C_2 z$ or $f = C_1 + C_2/z$ for $C_2 \neq 0$. Hence, $\Omega$ can be obtained from $B_1 - \overline{B}_{\beta}$ by a composition of translation and homotheties.
\end{proof}
The second part of \ref{positivemasstheoremSHEN} in \autoref{Shenwangresult} can be proven as an
 easy corollary of \autoref{keyc3inequalityannulus}. To see that, by \autoref{Mainresultannulus1} and the standard comparison theorem, $\lambda(\Omega,v)= 0$ implies that $\Omega$ must be a simply connected domain. Let $f \colon B_1 \to \Omega$ be a biholomorphic map and define $A(t), B(t)$ accordingly. Then \autoref{keyc3inequalityannulus} implies that $f$ is a M\"obius transformation, and therefore $f(B_1) = \Omega$ must be a disk.
\section{Appendix}
By \cite{MR4308060}, a solution to \eqref{Liouvilleinftyboundaryproblem} on $ B_r(p)-\{p\}$ is given by 
\begin{equation}
u_{B_r(p)-\{p\}}(x) = - \log (-\frac{\abs{x-p}}{r}\cdot \log\bigl(\frac{\abs{x-p}}{r}\bigr))-2 \ln r.
\end{equation}
A solution to $B_1(p)-\overline{B}_r(p)$ is given by 
\begin{equation}
u_{B_1(p)-\overline{B}_r(p)}(x)= - \log \biggl( \frac{\abs{x-p}}{\pi} \cdot \log (1/r)\cdot \sin \bigl(\frac{
\pi\log(1/\abs{x-p})}{ \log (1/r)} \bigr)\biggr).
\end{equation}
Then the growth condition \eqref{growthcondition} is equivalent to $u \geq u_{B_{r_l}(p_l)-\{p_l\}}$  for all $l=1,\cdots, n$.
\begin{theorem}\label{punctureexistence}
Let $\Omega$ be a bounded domain with smooth boundary. Let $p_1, \cdots, p_n \in \Omega$ be a collection of points. Then there exists $w \in C^\infty(M)$ such that 
\begin{equation}
	\begin{cases}
\Delta w = e^{2w} & \text{ in }\Omega - \{p_1, \cdots, p_n\}\\
w = \infty & \text{ on } \partial \Omega \cup \{p_1, \cdots, p_n\},
	\end{cases}
\end{equation}
and \eqref{growthcondition} is satisfied at $\{p_1,\cdots, p_n\}$.
\end{theorem}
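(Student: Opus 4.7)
My plan is to obtain $w$ as the decreasing pointwise limit of Loewner--Nirenberg solutions on a smooth exhaustion of $\Omega - \{p_1,\ldots,p_n\}$, using the explicit punctured-ball solutions recorded at the start of the appendix as one-sided lower barriers that force the correct blow-up at each puncture.

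For every $k$ large enough that the closed balls $\overline{B}_{1/k}(p_l)$ are disjoint and compactly contained in $\Omega$, the set $\Omega_k := \Omega - \bigcup_{l=1}^{n} \overline{B}_{1/k}(p_l)$ is a smooth bounded domain, so the existence and uniqueness theorem \cite{MR1237124} quoted in the introduction produces a unique $u_k \in C^\infty(\Omega_k)$ solving \eqref{Liouvilleinftyboundaryproblem} on $\Omega_k$. Since $\Omega_k \subset \Omega_{k+1}$ and $u_{k+1}$ is finite on $\partial \Omega_k \cap \Omega_{k+1}$ while $u_k \equiv +\infty$ there, the standard comparison principle for $\Delta u = e^{2u}$ yields $u_{k+1} \leq u_k$ on $\Omega_k$, so the pointwise limit $w := \lim_{k\to\infty} u_k$ exists on $\Omega - \{p_1,\ldots,p_n\}$.

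The next step is to pin down $w$ from below with two families of barriers. The unpunctured Loewner--Nirenberg solution $u_\Omega$, again supplied by \cite{MR1237124}, is finite on $\Omega$ and blows up on $\partial \Omega$, so comparison on $\partial \Omega_k$ gives $u_k \geq u_\Omega$, and the limit satisfies $w \geq u_\Omega$; this forces $w \to +\infty$ along $\partial \Omega$. By hypothesis $B_{r_l}(p_l) \supset \Omega$, so the explicit punctured-ball solution $u^{(l)} := u_{B_{r_l}(p_l) - \{p_l\}}$ is finite on $\Omega - \{p_l\}$ with a single blow-up at $p_l$; the same comparison on $\partial \Omega_k$ yields $u_k \geq u^{(l)}$, and so $w \geq u^{(l)}$. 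This simultaneously produces $w(p_l) = +\infty$ at every puncture and delivers exactly the growth condition \eqref{growthcondition}.

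Finally, to upgrade $w$ to a classical smooth solution, I would apply interior Schauder estimates for $\Delta u = e^{2u}$ on any compact set $K \Subset \Omega - \{p_1,\ldots,p_n\}$, on which the tail of the sequence $\{u_k\}$ is uniformly sandwiched between $u_{k_0}|_K$ from above (for $k_0$ so large that $K \Subset \Omega_{k_0}$) and the two barriers from below; bootstrapping then gives uniform $C^{m,\alpha}(K)$ bounds and $C^\infty_{\mathrm{loc}}$ convergence, so $w \in C^\infty(\Omega - \{p_1,\ldots,p_n\})$ solves the equation classically with the asserted boundary behavior. The main technical point I expect to have to be careful about is legitimizing the comparison principle across infinite boundary data; the standard remedy is to compare on slightly shrunken subdomains $\{x \in \Omega_k : d(x,\partial \Omega_k) > \delta\}$, where both functions are finite, and then send $\delta \to 0$ using the $-\log d$ boundary asymptotics common to every Loewner--Nirenberg solution appearing in the argument.
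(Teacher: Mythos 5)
Your proposal is correct and follows essentially the same route as the paper: exhaust $\Omega-\{p_1,\dots,p_n\}$ by smooth domains with small balls around the punctures removed, take the decreasing limit of the Loewner--Nirenberg solutions, bound it below by $u_\Omega$ and the explicit punctured-ball solutions $u_{B_{r_l}(p_l)-\{p_l\}}$ to force the blow-up and the growth condition, and pass to the limit via interior elliptic estimates. The only cosmetic differences are that the paper also exhausts $\Omega$ from within by compact sets $A_m$ and runs an interior $L^p$ estimate before the Schauder step, neither of which changes the substance.
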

\begin{proof}
	We follow the same idea as in \cite{MR1237124}.
Let $u_{\Omega}$ be the solution to \eqref{Liouvilleinftyboundaryproblem} on $\Omega$. Let $A_m$ be an exhaustion of $\Omega$ by compact subsets with smooth boundary. Take $\Omega_m = A_m - \bigcup_{l=1}^n B_{1/m} (p_l)$. Let $w_m$ be the solution to \eqref{Liouvilleinftyboundaryproblem} on $\Omega_m$. By the Maximum principle, $w_m$ is a decreasing sequence. Additionally, $w_m$ is bounded below by $u_{B_{r_l}(p_l)-\{p_l\}}$ for some $r$ sufficiently large, with $l = 1, 2, \cdots, n$. \par  
Fix $m$, by the interior $L^p$ estimate \cite{MR1814364}*{Theorem 9.11}, for $k > m$
\begin{equation}
	\norm{w_k}_{2,p;\Omega_m} \leq C(p,\Omega_m,\Omega_{m +1}) \bigl( \norm{w_k}_{p;\Omega_{m+1}}+ \norm{e^{w_k}}_{p;\Omega_{m+1}}\bigr)
\end{equation}
where  $ \norm{w_k}_{p;\Omega_{m+1}}, \norm{e^{w_k}}_{p;\Omega_{m+1}}$ is bounded independent of $k$, since 
\begin{equation}
\max(u_{B_{r_l}(p_l)-\{p_l\}},u_{\Omega}) \leq w_k \leq w_{m+1}
\end{equation}
for all $l$. This shows that $w_k$ is a bounded sequence in $W^{2,p}(\Omega_m)$. Hence is bounded in $C^{1,\alpha}(\Omega_m)$. \par 
By the interior Schauder estimate, \cite{MR1814364}*{Corollary 6.3}, we have 
\begin{equation}
	\abs{w_{k}}_{2,\alpha;\Omega_m} \leq C(\alpha, \Omega_m,\Omega_{m+1}) \bigl( \abs{w_k}_{0;\Omega_{m+1}} + \abs{e^{2w_{k}}}_{0,\alpha; \Omega_{m+1}}\bigr).
\end{equation}
Hence, $w_k$ is bounded in $C^{2,\alpha}(\Omega_m)$ independently of $k$. Taking a subsequence if necessary, we may assume that $w_m$ converges in $C^2(\Omega_m)$ to some $w$ such that 
\begin{equation}
	\Delta w = e^{2w}
\end{equation}
in $\Omega_m$ and $ \max(u_{\Omega}, u_{B_{r_l}(p_l)-\{p_l\}}) \leq w \leq w_{m+1}$. \par 
Pushing $m$ to $\infty$, $w_m$ converges to a smooth function $w$ satisfying $\Delta w = e^{2w}$ and 
\begin{equation}
 \max (u_{\Omega}, u_{B_{r_l}(p_l)-\{p_l\}})<w.
\end{equation}
 Since $u_{\Omega} = \infty$ on $\partial \Omega$, and $u_{B_{r_l}(p_l)-\{p_l\}} = \infty$ on $p_l$ for all $l$, we see that $w$ is the desired function.
\end{proof}
Following the work of \cite{MR1237124}, we shall establish the uniqueness of solution $u$ to \eqref{Liouvilleinftyboundaryproblem} on $\Omega - \{p_1,\cdots, p_n\}$ satisfying \eqref{growthcondition} in two steps. First we study the asymptotic behavior of a solution near $\{p_1, \cdots, p_n\}$. Then we apply maximum principle to the difference of two solutions. \par 
In the following, we assume without loss of generality that
\begin{equation}
B_2(p_l) \subset \Omega - \{p_1, \cdots, p_n\}
\end{equation}
 for all $l = 1, \cdots, n$.
\begin{proposition}\label{asympointboundary}
Let $u$ be a solution to \eqref{Liouvilleinftyboundaryproblem} on $\Omega - \{p_1,\cdots, p_n\}$. If $u$ satisfies \eqref{growthcondition} at $\{p_1,\cdots, p_n\}$, then there exists a constant $C$ such that 
\begin{equation}
\abs{u - \log \big(-\abs{x-p_l} \cdot \log (\abs{x-p_l}) \bigr)} < C
\end{equation}
in a neighborhood of $p_l$ for all $l=1,\cdots, n$.
\end{proposition}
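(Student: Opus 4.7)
The plan is to prove the bound by establishing matching lower and upper bounds on $u$ near each $p_l$, both of the form $-\log(-|x - p_l|\log|x - p_l|) + O(1)$. The lower bound is essentially immediate from the growth hypothesis; the upper bound requires a comparison argument against the explicit annular solutions from the start of the appendix.

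For the lower bound, I would expand the right-hand side of \eqref{growthcondition} as
\begin{equation*}
-\log\bigl(-r_l\,|x-p_l|\,\log(|x-p_l|/r_l)\bigr) = -\log|x-p_l| - \log\bigl(-\log|x-p_l| + \log r_l\bigr) - \log r_l.
\end{equation*}
Since $-\log|x-p_l| \to +\infty$ dominates the constant $\log r_l$ as $x \to p_l$, the middle term equals $\log(-\log|x-p_l|) + O(1)$, and so \eqref{growthcondition} yields $u(x) \geq -\log(-|x-p_l|\log|x-p_l|) - C_1$ on a deleted neighborhood of $p_l$.

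For the upper bound, fix $\rho > 0$ small enough that $\overline{B}_\rho(p_l) \subset \Omega$ contains no other puncture. For each $0 < r < \rho$, let $W_r$ denote the Liouville solution on the annulus $B_\rho(p_l) - \overline{B}_r(p_l)$ obtained by translating and scaling the explicit formula for $u_{B_1(p)-\overline{B}_r(p)}$ in the appendix. Then $W_r = +\infty$ on both boundary circles, while $u$, being a classical solution on $\Omega-\{p_1,\ldots,p_n\}$, is bounded on this closed annulus. The maximum principle gives $u \leq W_r$ on the open annulus: the difference $u - W_r$ tends to $-\infty$ at both boundary circles, so if its supremum were positive it would be attained at an interior point $x^\ast$, where $\Delta(u-W_r)(x^\ast)\leq 0$ contradicts $\Delta(u-W_r) = e^{2u} - e^{2W_r} > 0$. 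An analogous comparison on $B_\rho(p_l) - \overline{B}_{r_2}(p_l)$ shows $W_{r_1} \leq W_{r_2}$ for $r_1 < r_2$, so $W_r$ is decreasing in $r$, and hence $u(x) \leq \lim_{r \to 0} W_r(x)$ for each fixed $x \neq p_l$.

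To evaluate this limit, I would use $L\sin(\pi s/L) \to \pi s$ as $L = \log(\rho/r) \to \infty$ inside the explicit formula for $W_r$, obtaining
\begin{equation*}
\lim_{r \to 0} W_r(x) = -\log|x-p_l| - \log\bigl(-\log|x-p_l| + \log\rho\bigr),
\end{equation*}
which equals $-\log(-|x-p_l|\log|x-p_l|) + O(1)$ as $x \to p_l$ by the same reduction as in the lower bound. Combining with the lower bound and absorbing constants into a single $C$ yields the proposition. The main technical point is the maximum-principle step: one needs local boundedness of $u$ near both boundary circles so that $u - W_r \to -\infty$ there, but since the closed annulus sits compactly inside $\Omega - \{p_1,\ldots,p_n\}$ this is automatic, and the two-sided blow-up of $W_r$ then cleanly localizes the comparison.
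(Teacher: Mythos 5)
Your argument is correct and follows essentially the same route as the paper: the lower bound is read off from \eqref{growthcondition} (which is exactly the punctured-disk solution), and the upper bound comes from comparing $u$ with the explicit blow-up solutions on annuli $B_\rho(p_l)-\overline{B}_r(p_l)$ via the maximum principle and letting $r\to 0$ (the paper implements this same limit by setting $r=\abs{x}^{k}$ and sending $k\to\infty$). One trivial wording slip: since $W_{r_1}\le W_{r_2}$ for $r_1<r_2$, the family $W_r$ is \emph{increasing} in $r$ (it decreases as $r\to 0$), but this does not affect the conclusion $u\le \lim_{r\to 0}W_r$, which already follows from $u\le W_r$ for every $r$ together with the explicit formula.
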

\begin{proof}
Fix any $l$, and assume, without loss of generality, that $p_l = 0$. By the maximum principle, 
\begin{equation}
 u_{B_{r_l}-\{0\}} \leq u \leq u_{B_1-\overline{B}_r}
\end{equation}
on $B_1 - \overline{B}_r$, for any $0< r < 1$. For $x\in B_1 - \overline{B}_r$, let $\abs{x}$ be the distance to $p_l$. Taking $r = \abs{x}^k$, then 
\begin{equation}
u_{B_1- \overline{B}_r}(x) = -\log \biggl( \frac{-k\abs{x}}{\pi} \cdot \log(\abs{x}) \cdot \sin \bigl( \frac{\pi}{k}\bigr)\biggr).
\end{equation}
Pushing $k \to \infty$, we see that 
\begin{equation}
u \leq - \log \big(-\abs{x} \cdot \log (\abs{x}) \bigr)
\end{equation}
near $p_l$. Now, the difference between the upper bound and the lower bound of $u$ equals to 
\begin{align}
-\log(-\abs{x} \log(\abs{x})) + \log (-r\abs{x} \log(\abs{x}/r))= \log \biggl(\frac{r\log(\abs{x}/r)}{\log (\abs{x})} \biggr),
\end{align}
which converges to $0$ as $\abs{x} \to 0$. Proving the claim.
\end{proof}
\begin{proof}[Proof of \autoref{existenceextension}]
The existence of a solution follows from \autoref{punctureexistence}.
	We follow the same idea as in \cite{MR1237124}.
Let $u_1$ and $u_2$ be two solutions satisfying \eqref{growthcondition} at $p_1, \cdots, p_n$.
By adding a constant if necessary, we may assume that $u_1,u_2 \geq 2 \log 2$; then, for these two functions after modification, there exists some constant $C$ such that 
\begin{equation}
\Delta u_i = C e^{2 u_i}
\end{equation}
on $\Omega$ for $i= 1,2$. \par 
For any $1/2 < s < 1$, by \autoref{asympointboundary}, 
\begin{equation}
\frac{su_1}{u_2} = \frac{s u_1/(-\log(-\abs{x-p_l} \cdot \log(\abs{x - p_l})))}{ u_2/(-\log(-\abs{x-p_l} \cdot \log(\abs{x - p_l})))} \to s < 1
\end{equation}
near $p_l$ for all $l = 1,2, \cdots, n$. By \cite{MR3758549}, near other non-degenerate boundary components, $su_1/u_2$ also converges to $s < 1$. \par 
We claim that $su_1 \leq u_2$. If not, let $x_0$ be the point where $su_1 - u_2$ attains its maximum. Then $(su_1 - u_2)(x_0) > 0$, and 
\begin{equation}
0 \geq \Delta (su_1 - u_2)(x_0)= s Ce^{2u_1} - C e^{2 u_2}\geq e^{2s u_1} - e^{2 u_2} > 0,
\end{equation}
where the second inequality comes from \cite{MR1237124}*{Lemma 4.3}, which leads to a contradiction. \par 
Pushing $s \to 1$, we get $u_1 \leq u_2$. By switching $u_1$ and $u_2$, we get the inequality from other direction, thereby proving the claim. 
\end{proof}
\bibliography{A_new_renormalized_volume_type_invariant_submit}
\end{document}